\newtheorem{theorem}{Theorem}[section]
\newtheorem{proposition}[theorem]{Proposition}
\newtheorem{lemma}[theorem]{Lemma}
\newtheorem{corollary}[theorem]{Corollary}
\newtheorem*{namedtheorem}{\theoremname}
\newcommand{\theoremname}{testing}
\newenvironment{named}[1]{\renewcommand{\theoremname}{#1}\begin{namedtheorem}}{\end{namedtheorem}}
\theoremstyle{definition}
\newtheorem{definition}[theorem]{Definition}
\title[Generalized Augmented Braids]{Geometry of Generalized Augmented Braids}
\author{Thiago de Paiva}
\address[]{School of Mathematics, Monash University, VIC 3800, Australia }
\email[]{thiago.depaivasouza@monash.edu}
\begin{document}
 
\begin{abstract}
A generalized augmented link of a knot $K$ is a link obtained by adding trivial components to $K$ that
bound $n$-punctured disks. In this paper we consider that $K$ is given by a positive braid with at least one full twist. 
We characterize when the generalized augmented links of $K$ have hyperbolic geometry.
\end{abstract}  
 
\maketitle

\section{Introduction}

Consider a knot $K$ and $P$ its projection plane. Let $C_1, \dots, C_r$ be a collection of unlinked circles $C_1, \dots, C_r$ that transversely intersect $P$ into two points and are not isotopic to each other in the exterior of $K$ with each $C_i$ encircling $r_i$ strands of $K$. The link $L = K \cup C_1 \cup \dots \cup C_r$ is called a generalized augmented link of the knot $K$.  

Given a knot $K$, the study of the generalized augmented links $L = K \cup C_1 \cup \dots \cup C_r$ of $K$ can be useful to understand $K$ and knots obtained from $K$ by applying full twists along the trivial components $C_1, \dots, C_r$. 
For example, if $K$ is a hyperbolic knot and a generalized augmented link $L$ of $K$ is also hyperbolic, then the hyperbolic volume of $L =  K \cup C_1 \cup \dots \cup C_r$ is an upper bound of the volumes of $K$ and knots obtained from $K$ by applying full twists along $C_1, \dots, C_r$ as the hyperbolic volume reduces under Dehn filling. Also when we know the geometric type of $L$, we may find the geometric types of $K$ and knots obtained by applying full twists along the trivial components $C_1 \cup \dots \cup C_r$, for example see Corollary~\ref{firstcorollary} and Purcell \cite[Theorem 3.2]{MR2660459}.
See Blair, Futer and Tomova\cite{MR3361143}, Futer, Kalfagianni and Purcell \cite{MR2660569}, Lackenby \cite{MR2018964}, Purcell \cite{MR2471374}, Dasbach and Tsvietkova \cite{MR3391876} for more applications of the study of  generalized augmented links.


The exterior of $L$ has the advantage that it can be simplified by removing the ``full twists'' of $K$ by applying full twists  along the trivial components $C_1, \dots, C_r$. Denote by $K'$ the knot that we obtain  after this ``simplification''.
In the paper \cite{MR3709649} Adams studies when $K'$ is an alternating knot. However, if we allow the ``simplified knot'' $K'$ to be not necessarily alternating, then it has the advantage of not adding too many circles to the generalized augmented link $L$.
The same problem can occur if the trivial components can only encircle two strands of $K$, as it has been done in some previous works.
This condition might be sufficient to study some families of knots, see for example \cite{MR2796634}, but it may not be effective to study other families of knots, for example Lorenz links and twisted torus links.
Lorenz links form a family of links that are periodic orbits of the Lorenz system. See \cite{MR4651895}, \cite{newtwis}, \cite{MR4494619}, \cite{MR4576328}, \cite{de2023hyperbolic} for more information about Lorenz links.
Twisted torus links are torus links with some full twists added along some number of adjacent strands. See \cite{hyperbolicity}, \cite{MR4562562}, \cite{MR4411809}, \cite{MR4551666} for more information about twisted torus links.
Some Lorenz links and twisted torus knots are represented by positive braids with at least one full twist and many full twists in more than two strands in other parts of the braids. In fact, every Lorenz links has a positive braid with at least one full twist as a minimal braid, see \cite[Theorem 5.1]{Knottedperiodicorbits} or \cite[Theorem 3.5]{Lorenzknots}. 
So, if we don't allow the trivial components to encircle more than two strands, then we might need to add many circles to simply $K$ satisfactorily. 




As far as the author knows, the study of the geometry of generalized augmented links $L = K \cup C_1 \cup \dots \cup C_r$  started with Adams in 1986 where he first addressed the case that $K$ is an alternating knot and the circles $C_1, \dots, C_r$ encircle only two strands of $K$ \cite{MR0903861}. Later he considered the case that $K$ is still an alternating knot but now the circles $C_1, \dots, C_r$ can encircle more than two strands of $K$ \cite{MR3709649}. Generalized augmented links with  trivial components encircling more than two strands have been studied in some settings, see for example \cite{MR2672234}, \cite{MR3601570}.
But when the knot $K$ is given by a non-alternating braid, then only the generalized augmented links of torus knots have been considered. Lee first considered the case of generalized augmented links of torus knots with only one trivial component \cite{MR3264672}. de Paiva and Purcell considered the case in which generalized augmented links of torus knots can have more than one trivial component but these trivial components have the restriction of being placed encircling the leftmost strands of the standard braids of torus knots \cite{MR4651895}, \cite{de2023hyperbolic}.
 
\begin{figure}
\centering
\includegraphics[scale=4]{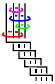} 
\caption{This example illustrates a generalized augmented link of the torus knot $T(4, 5)$.}
\label{Figure}
\end{figure} 
 
Denote by $\sigma_1, \dots, \sigma_{n-1}$ the standard generators of the braid group $B_{n}$.
Consider a positive braid $B$ with $n$ strands. We say that $B$ has $k$ full twists if it has the braid $(\sigma_1\dots\sigma_{n-1})^{kn}$ as a sub-braid with $k>0$.

In this paper we characterize when the generalized augmented links of knots $K$ given by positive braids with at least one  full are hyperbolic. 
More precisely, we study the following case: let $B$ be a positive braid with $n$ strands and at least one full twist representing a knot $K$. Consider a collection of unlinked circles $C_1, \dots, C_r$ that are not isotopic to each other in the exterior of the braid closure of $B$ with each $C_i$ encircling $r_i$ strands of $B$ with $1<r_i\leq n$ such that after applying a positive full twist along each $C_i$ we obtain a positive braid diagram with at least one full twist from $B$. The link $L = K \cup C_1 \cup \dots \cup C_r$ is a generalized augmented link of the knot $K$ as illustrated by an example in Figure~\ref{Figure}. We denote the 3-manifold $\mathbb{S}^3- N(L)$ by $M(K; C_1, \dots, C_r)$. Our main theorem is the following.





\begin{theorem}\label{maintheorem}
Let $B$ be a positive braid with $n$ strands and at least one positive full twist representing a knot $K$. Consider a collection of unlinked circles $C_1, \dots, C_r$ that are not isotopic to each other in the exterior of $K$ with each $C_i$ encircling $r_i$ strands of $B$ with $1<r_i\leq n$ such that after applying a positive full twist along each $C_i$ we obtain a positive braid diagram with at least one full twist from $B$. We denote the link $K \cup C_1 \cup \dots \cup C_r$ by $L$.
\begin{itemize}
\item If $K$ is a hyperbolic knot, then the link $L$ is always hyperbolic.  

\item If $K$ is a torus knot, then the link $L$ is hyperbolic if and only if $r> 1$ or $C_1$ is not the braid axis of $B$.

\item If $K$ is a satellite knot, then the link $L$ is hyperbolic if and only if whenever $K$ is a generalized $q$-cabling of the core of a knotted torus $T$, then $T$ intersects at least one $C_i$. For instance, it happens when there is $a_i > q$ which is not a multiple of $q$.
\end{itemize} 
\end{theorem}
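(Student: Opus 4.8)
The plan is to verify hyperbolicity through Thurston's characterization: the link exterior $M=M(K;C_1,\dots,C_r)$ is hyperbolic if and only if it is irreducible, boundary-irreducible, atoroidal, anannular, and not Seifert fibered. I would first dispose of the two easy conditions, which hold uniformly across all three cases. Because each augmenting disk bounded by $C_i$ meets $K$ in $r_i\ge 2$ points and $B$ is a nontrivial positive braid, the link $L$ is non-split and no component bounds a disk in the complement; hence $M$ is irreducible and boundary-irreducible. All of the content therefore lies in controlling essential annuli and tori and in detecting a Seifert fibration, and this is exactly where the trichotomy enters, through the geometric decomposition of $\mathbb{S}^3-N(K)$ into its hyperbolic, Seifert-fibered, and toroidal types.

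The central technical device I would use is to put an essential annulus or torus into normal form with respect to the twice-or-more-punctured disks $D_i$ bounded by the $C_i$ together with the twist regions of the positive braid $B$ (equivalently, with respect to the meridian-disk fibration of the solid torus complementary to the braid axis, in which $K$ is an $n$-punctured-disk bundle). The positive-braid-with-a-full-twist hypothesis is precisely what makes this decomposition rigid: the full twist forces each intersection arc or curve to traverse the twisting in a controlled fashion, so that an essential annulus or torus in $M$ must either be isotopic onto $\partial N(K)$—and hence arise from an essential annulus or torus already present in $\mathbb{S}^3-N(K)$ and disjoint from every $C_i$—or be an augmentation annulus cobounded by some $C_i$ and a curve on another boundary torus. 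Excluding the latter type is where the full twist is indispensable, since the extra twisting obstructs the parallelism such an annulus would demand. I expect the normal-form argument establishing this dichotomy to be the main obstacle of the proof.

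Granting the dichotomy, the three cases follow from the geometric type of $K$. If $K$ is hyperbolic, then $\mathbb{S}^3-N(K)$ is atoroidal and anannular, so no surfaces of the first type exist; with the augmentation annuli already excluded, $M$ is hyperbolic. If $K$ is the torus knot $T(p,q)$, then $\mathbb{S}^3-N(K)$ is Seifert fibered and the braid axis $A$ is a curve along which this fibration extends; when $r=1$ and $C_1=A$ the exterior $M=\mathbb{S}^3-N(T(p,q)\cup A)$ is the complement of a torus knot in a solid torus, which is itself Seifert fibered and so not hyperbolic, whereas any additional circle, or a single circle encircling fewer than $n$ strands (so that $C_1\ne A$ up to isotopy), meets the fibration transversely and kills every essential annulus and torus, yielding hyperbolicity. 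If $K$ is a satellite with companion torus $T$, then $T$ is an essential torus of the first type; it survives in $M$ exactly when it can be isotoped off all the $C_i$, which is the toroidal, non-hyperbolic case, and conversely if some $C_i$ is forced to cross $T$—as happens when an augmenting disk meets $a_i>q$ strands with $a_i\not\equiv 0\pmod q$, since then the disk cannot be pushed across the $q$-periodic cabling pattern carried by $T$—then no essential torus remains and $M$ is hyperbolic.

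Finally, I would assemble these pieces, combining the irreducibility and boundary-irreducibility observations with the annulus–torus dichotomy and the Seifert-fibration analysis to conclude hyperbolicity via Thurston in the positive cases. To secure each ``if and only if'' in both directions, I would also verify the converses concretely: exhibit the explicit Seifert fibration of $\mathbb{S}^3-N(T(p,q)\cup A)$ in the excluded torus-knot case, and exhibit the explicit surviving companion torus in the excluded satellite case, confirming that $M$ is genuinely non-hyperbolic there.
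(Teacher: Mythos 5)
Your overall framework (Thurston's hyperbolization criterion, the trichotomy on the geometric type of $K$, and the identification of the exceptional non-hyperbolic configurations) matches the paper, and your irreducibility and boundary-irreducibility step is essentially the paper's linking-number argument. However, the entire technical content of the theorem is concentrated in the step you defer: the classification of essential tori and annuli in $M(K;C_1,\dots,C_r)$. You propose to obtain this by a normal-form ``rigidity'' argument with respect to the punctured disks and the twist regions, and you explicitly flag it as ``the main obstacle'' without carrying it out. That is a genuine gap, not a routine verification: for generalized augmented links whose disks meet more than two strands there is no off-the-shelf normal-form machinery, and your claimed dichotomy (every essential annulus or torus is either inherited from $\mathbb{S}^3-N(K)$ and disjoint from all $C_i$, or is an ``augmentation annulus'') is precisely what has to be proved. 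The paper's actual mechanism is different and worth noting: for tori, it shows by linking-number arguments that a torus essential in $M$ but inessential in the exterior of $K$ would have to meet some disk $D_i$ in at least two meridian curves, then performs Dehn fillings along the $C_i$ to reduce to a positive braid closure with a full twist, and invokes Ito's theorem that an essential torus in such a closure is disjoint from the braid axis, together with the Franks--Williams minimality of positive braids with a full twist, to derive a contradiction. For annuli, it runs a case analysis on which boundary components of $M$ carry $\partial A$, again driven by linking numbers and by the classification of annuli in torus-knot and cable-knot exteriors.

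A second, smaller point: in the excluded torus-knot case ($r=1$ and $C_1$ the braid axis) you argue non-hyperbolicity by exhibiting a Seifert fibration of the complement of $T(p,q)\cup A$, whereas the paper exhibits an explicit essential annulus running from $\partial N(K)$ to $\partial N(C_1)$, using Los's isotopy of the closed braid onto the standard torus knot in the complement of the axis; either route is acceptable for that direction. But in the positive direction your assertion that any other configuration of circles ``meets the fibration transversely and kills every essential annulus and torus'' is again an unproved claim standing in for the bulk of the paper's annulus section, including the exclusion of essential annuli with both boundary components on a single $\partial N(C_i)$ or on $\partial N(K)$. Until the annulus and torus classification is actually established, the proposal is a plausible outline rather than a proof.
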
 

See Definition~\ref{cabling} for the definition of generalized cabling.

The structure of the paper is as follows: in the first section we characterize when the generalized augmented links of $K$ have no essential spheres, discs, and tori; in the second section we identify when the generalized augmented links of $K$ have no essential annuli; finally in the third section we conclude when the generalized augmented links of $K$ are hyperbolic. 





\subsection{Acknowledgment}
I thank Jessica Purcell for helpful discussions and Tetsuya Ito for helpful comments.

\section{Essential spheres, discs, and tori}

In this section we characterize when $M(K; C_1, \dots, C_r)$ has no essential spheres, discs, and tori. 

\begin{proposition}\label{proposition0}
The manifold $M(K; C_1, \dots, C_r)$ has no essential spheres and essential discs. Therefore, $M(K; C_1, \dots, C_r)$ is irreducible and boundary irreducible.
\end{proposition}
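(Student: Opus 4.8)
The plan is to reduce the proposition to two standard facts about link exteriors in $\mathbb{S}^3$: the exterior $E(L) = \mathbb{S}^3 - N(L)$ is irreducible if and only if $L$ is non-split, and it is boundary irreducible precisely when no boundary torus $\partial N(L_i)$ admits a compressing disc. Since $M(K; C_1, \dots, C_r) = E(L)$ and its boundary is a union of tori, an essential sphere is the same as a non-bounding (splitting) sphere and an essential disc is the same as a compressing disc. Thus it suffices to prove that $L$ is non-split and that no boundary component compresses, and both will follow from the single observation that all the relevant linking numbers coming from the braid are nonzero.

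First I would establish non-splitness. Each circle $C_i$ encircles $r_i \geq 2$ coherently oriented strands of the positive braid $B$, so $\mathrm{lk}(C_i, K) = \pm r_i \neq 0$. Consequently no embedded $2$-sphere can separate $C_i$ from $K$, since any such sphere would force $\mathrm{lk}(C_i, K) = 0$. As $K$ is a single component and every $C_i$ is inseparable from it, no sphere can partition the components of $L$ into two nonempty families; hence $L$ is non-split and $E(L)$ is irreducible, so $M(K; C_1, \dots, C_r)$ has no essential spheres.

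For boundary irreducibility I would argue homologically. Recall that $H_1(E(L))$ is free abelian on the meridians $\mu_K, \mu_{C_1}, \dots, \mu_{C_r}$, and that the longitude of any component $L_i$ is homologous to $\sum_{j \neq i} \mathrm{lk}(L_i, L_j)\,\mu_j$. A compressing disc for $\partial N(L_i)$ would have boundary an essential simple closed curve $p\mu_i + q\lambda_i$ that is null-homologous in $E(L)$, which forces $p = 0$ together with $q \cdot \mathrm{lk}(L_i, L_j) = 0$ for every $j \neq i$. But every component links some other component nontrivially: $K$ links each $C_j$ in $r_j$ points, and each $C_i$ links $K$ in $r_i$ points. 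Hence $q = 0$ as well, so the class $p\mu_i + q\lambda_i$ is trivial, contradicting that the curve is essential on the torus. Therefore no boundary torus compresses, $E(L)$ is boundary irreducible, and there are no essential discs.

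The one point requiring care, and where a naive argument would break down, is the incompressibility of $\partial N(C_i)$: each $C_i$ is an unknot, so $\partial N(C_i)$ does compress in $\mathbb{S}^3 - C_i$ via the disc bounded by $C_i$. The essential feature is that this disc is the $r_i$-punctured disc meeting $K$, so it is destroyed once $N(K)$ is removed, and the linking computation above is exactly what rules out any replacement compressing disc inside $E(L)$. I expect this to be the main obstacle, and the homological bookkeeping is what overcomes it cleanly and uniformly across all boundary components; notably it uses only the nonvanishing of the linking numbers and not the full-twist hypothesis, which will instead be needed for the later propositions on essential tori and annuli.
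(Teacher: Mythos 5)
Your proof is correct and takes essentially the same route as the paper: the nonvanishing linking numbers $lk(K, C_i) = \pm r_i \neq 0$ are what rule out both splitting spheres and compressing discs. The only difference is presentational: where you compute in $H_1(E(L))$ that the boundary of a compressing disc would have to be null-homotopic on the boundary torus, the paper argues geometrically that such a disc would make the corresponding component an unknot with a Seifert disc disjoint from the rest of $L$ --- both contradictions flow from exactly the same linking-number facts.
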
 

\begin{proof}Consider that $M(K; C_1, \dots, C_r)$ has an essential 2-sphere $S$. Then, $S$ doesn't bound a ball in $M(K; C_1, \dots, C_r)$. This implies that there is at least one link component of $L=K \cup C_1 \cup \dots \cup C_r$ in each side of $M(K; C_1, \dots, C_r) - S$. Hence there is at least one $C_i$ in a different side of $K$. However this implies that the linking number $lk(K, C_i)$ between $K$ and $C_i$ is zero, a contradiction. So, $M(K; C_1, \dots, C_r)$ has no essential 2-spheres. Therefore, $M(K; C_1, \dots, C_r)$ is irreducible.

Suppose now that $M(K; C_1, \dots, C_r)$ has a boundary reducible disk $D$.
Then, either the boundary of $\partial D$ does not bound a disk on $\partial M(K; C_1, \dots, C_r)$, or
$\partial D$ does bound a disk $E$ on $\partial M(K; C_1, \dots, C_r)$ but $D\cup E$ does not bound a 3-ball in $M(K; C_1, \dots, C_r)$.
It is not possible that $\partial D \subset \partial M(K; C_1, \dots, C_r)$ bounds a disk $E$ on $\partial M(K; C_1, \dots, C_r)$ but $E\cup D$ does not bound a 3-ball, as $M(K; C_1, \dots, C_r)$ is irreducible.
Consider $L_j$ the component of $L$ in which $\partial D$ lies. Then, $\partial D$ does not bound a disk on $\partial N(L_j)$. Thus, $L_j$ would be trivial and $D$ would be its Seifert surface. Since $D$ is not punctured by the other link components of $L$, $L_j$ would have linking numbers equal to zero with the other link components of $L$, a contradiction. Hence $M(K; C_1, \dots, C_r)$ has no essential disks. Therefore, $M(K; C_1, \dots, C_r)$ is also boundary irreducible.
\end{proof}


\begin{definition}\label{cabling}
A generalized $q$-cabling of a link $L$ is a link $L'$ contained in the
interior of a tubular neighbourhood $L\times D^2$ of $L$ such that
\begin{enumerate}
\item each fiber $D^2$ intersects $L'$ transversely in $q$ points;  and

\item all strands of $L'$ are oriented in the same direction as $L$ itself.
\end{enumerate} 
\end{definition}

Denote by $D_{r_i}$ the disc bounded by $C_{r_i}$. 

\begin{lemma}\label{trivial,meridian}
If $M(K; C_1, \dots, C_r)$  has an essential torus $T$ that is not essential when embedded in the exterior of $K$, then $T$ intersects at least one disc $D_{r_i}$ by meridians. 
\end{lemma}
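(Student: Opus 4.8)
The plan is to compare $T$ inside $M:=M(K;C_1,\dots,C_r)$ with its image in the knot exterior $X_K:=\mathbb{S}^3\setminus N(K)$, using that $X_K$ is recovered from $M$ by Dehn filling each cusp $\partial N(C_i)$ along the meridian of $C_i$, and that $D_{r_i}$ meets $M$ in an $r_i$--punctured planar surface $P_i$ whose punctures lie on $\partial N(K)$. Throughout I would exploit the positivity of the braid through the linking numbers $lk(C_i,K)=r_i>0$. First I would isotope $T$ so that it meets $\bigcup_i P_i$ transversely in the fewest possible circles. Since $T$ is incompressible in $M$ and $M$ is irreducible by Proposition~\ref{proposition0}, an innermost--disc argument removes every circle of $T\cap P_i$ that bounds an unpunctured subdisc of $P_i$; after this reduction each surviving circle encircles at least one puncture.

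Next I would extract the structural consequence of $T$ being inessential in $X_K$. Because $T$ is incompressible in $M$ but compressible or boundary--parallel in the irreducible manifold $X_K$, in either case $T$ bounds a solid torus $U\subset\mathbb{S}^3$ with $T=\partial U$: in the boundary--parallel case $U$ is the region between $T$ and $\partial N(K)$ together with $N(K)$, so $\mathrm{core}(U)=K$; in the compressible case compressing $T$ yields a sphere bounding a ball, so $T$ again bounds a solid torus. Essentiality of $T$ in $M$ then forces at least one $C_i$ to wind nontrivially around $U$, for otherwise $U$, or the product region witnessing the boundary--parallelism, would persist intact in $M$ and make $T$ inessential there.

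Then I would show that $T$ meets some $D_{r_i}$ and does so in meridians. Fix a $C_i$ winding nontrivially around $U$. Since a disc in a solid torus has null--homotopic boundary, the spanning disc $D_{r_i}$ cannot be pushed into $U$, so it must cross $T=\partial U$ and $T\cap D_{r_i}\neq\emptyset$. Every surviving circle $\gamma$ of $T\cap P_i$ is essential on $T$: were $\gamma$ to bound a disc on $T$, that disc would be disjoint from $K$ and would force $lk(\gamma,K)=0$, contradicting that $\gamma$ encircles punctures of a positive braid, which all have the same sign. Disjoint essential curves on a torus are mutually parallel, hence homologous in $X_K$, so all the circles $\gamma$ encircle the same number of punctures. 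Taking $\gamma$ innermost on $D_{r_i}$, the subdisc it bounds carries at least one puncture, meets $K$, and, having no interior intersection with $T$, lies in $U$; being essential on $\partial U$ and bounding a disc in $U$, the curve $\gamma$ is the meridian of $U$. In the boundary--parallel case $\mathrm{core}(U)=K$, so this meridian meets $K$ exactly once and is a meridian of $K$, and by the parallelism every circle of $T\cap D_{r_i}$ is then a meridian.

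The main obstacle is the structural step: verifying uniformly, across both the compressible and the boundary--parallel cases, that $T$ bounds a solid torus $U$ and that some $C_i$ genuinely winds around $U$ rather than being isotopable out of it, and ensuring that in the compressible case the innermost curve is identified with a meridian of $K$ and not merely of $\mathrm{core}(U)$. Everything downstream runs on the single positivity input $lk(C_i,K)=r_i>0$, which simultaneously forces the spanning disc out of $U$ (yielding a nonempty intersection) and prevents sign cancellation when counting punctures (identifying the innermost curves as meridians); the delicate point is to set up $U$ so that these linking computations are legitimate.
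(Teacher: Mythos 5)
Your outline diverges from the paper's proof in a fundamental way: the paper's argument is driven by the positive-braid machinery (a Dehn-surgery trick along the $C_i$, which by hypothesis produces a positive braid $B'$ with a full twist, followed by Ito's theorem that an essential torus in the exterior of such a braid closure misses the braid axis, forcing $K$ to be a generalized cabling of the core of $T$ and contradicting inessentiality in the exterior of $K$). You try to replace all of this by a purely homological argument, and the replacement has a genuine gap exactly at the step you yourself flag as ``the main obstacle.'' The claim that essentiality of $T$ in $M$ forces some $C_i$ to \emph{wind nontrivially} around the solid torus $U$ is not justified, and it is not a consequence of $lk(C_i,K)=r_i>0$. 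In the boundary-parallel case $U$ is a tubular neighbourhood of $K$, and for a curve $C_i$ in the product region $U\setminus N(K)\cong T^2\times I$ the linking number with $K$ is the coefficient of the \emph{meridian} class, while the winding number in $U$ is the coefficient of the \emph{longitude} class; the meridian dies in $H_1(U)$. So a $C_i$ homologous to $r_i\mu$ has $lk(C_i,K)=r_i\neq 0$ but winding number zero in $U$, it still obstructs boundary-parallelism of $T$ in $M$ (so $T$ is essential in $M$), and yet its spanning disc can lie entirely inside $U$ (for instance a band-sum of $r_i$ meridian discs), so your homological forcing of $D_{r_i}\cap T\neq\emptyset$ fails. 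Ruling out precisely this configuration is what requires the hypothesis that positive full twists along the $C_i$ yield a positive braid with a full twist, which is what the paper's appeal to Ito's theorem accomplishes; your proposal never uses that hypothesis, which is a structural sign the shortcut cannot close.

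A second, smaller gap: in the compressible case the solid torus $U$ into which $T$ compresses in the exterior of $K$ need not contain $K$ at all. Your later identification of the innermost curve as a meridian assumes the innermost subdisc of $D_{r_i}$ (which meets $K$) lies in $U$; if $K$ is on the other side, that subdisc lies outside $U$, and the innermost curve is instead a meridian of the \emph{complementary} solid torus (so $T$ is unknotted) --- a case the paper handles explicitly at the end of its proof but your write-up does not. Your innermost-circle analysis (coherent punctures forcing every surviving intersection curve to be essential on $T$, parallelism of disjoint essential curves on a torus) does match the second half of the paper's argument and is fine; the unproved structural step is what breaks the proposal.
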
 

\begin{proof}
Suppose that $T$ doesn't intersect any $D_{i}$.
Consider that there is one $C_{j}$ in a different side of the manifold $S^{3}-T$ containing $K$. Since the disc $D_{j}$ doesn't intersect $T$, $D_{j}$ is completely contained in the same side that contains $C_{j}$. The disc $D_{j}$ is a Seifert surface for $C_{j}$. But since $D_{j}$ doesn't puncture $K$, this implies that the linking number between $K$ and $C_{j}$ is zero, a contradiction. So, all $C_{j}$ are in the same side as $K$. Hence $T$ is knotted.


Let $D$ be any meridian disc of $T$. Since $T$ is essential in the exterior of $L$, the disc $D$ must intersect $K$ or at least one circle $C_{j}$. Consider that $D$ intersects $C_{j}$ instead of $K$. The disc bounded by $C_{j}$ is completely inside $T$. When we apply a (1)-Dehn surgery along $C_{j}$, the knot $K$ is transformed into a new knot that punctures $D$ at least twice. Therefore, after applying (1)-Dehn filling along  each $C_i$, $B$ is transformed into a new positive braid $B'$ with at least one full twist such that $T$ is essential in the exterior of its closure, called $K'$. By Ito \cite[Theorem 5.5, Lemma 3.5]{ito2024satellite}, $T$ doesn't intersect the braid axis of $B'$. 
So, $T$ lives in the solid torus given by the complement of the braid axis of $B'$. This means that $K'$ is a generalized $q$-cabling of the core of $T$ with $q>1$.
Since $T$ doesn't intersect each $C_i$, if we perform $(-1)$-Dehn filling along each $C_i$, then $K$ would remain a generalized $q$-cabling of the core of $T$, but this contradicts the fact that $T$ is inessential in the exterior of $K$. Hence $T$ intersects at least one $D_{i}$.


Consider $\gamma$ a loop in $D_{i}\cap T$. This loop $\gamma$ bounds a disc $D$ in $D_{i}$, so $\gamma$ is trivial. 
Furthermore, we can assume that $\gamma$ is essential in both $D_{i}$ and $T$. Because if $\gamma$ is essential in $T$ but not in 
$D_{a_i}$, then $D$ is not punctured by $K$ and it would be a compressible disc for $T$, which is not possible. 
Suppose now that $\gamma$ is essential in $D_{i}$ but not in  $T$. Then, we can find a disc $D'$ in $T$ with boundary in $\gamma$. Furthermore, since $\gamma$ is essential in $D_{i}$, $D$ intersects $K$ at least one time. 
The two discs $D$ and $D'$ are both Seifert surfaces for $\gamma$. Then, because of $D$ we conclude that the linking number between $K$ and $\gamma$ is non-zero and because of $D'$ we conclude that the linking number between $K$ and $\gamma$ is zero, a contradiction. Finally consider that $\gamma$ is not essential neither in $D_{a_i}$ nor in $T$. Then, $\gamma$ bounds a disc $D'$ in $T$. 
The two discs $D$ and $D'$ glue together long their boundaries to produce a sphere. This sphere bounds a ball that can be isotopic to miss the disc $D_{i}$ so that the loop $\gamma$ no longer exists in $D_{i}\cap T$. But $T$ must intersect $D_{i}$. So, we can consider that all loops in $D_{i}\cap T$ are essential in both $D_{i}$ and $T$. Additionally, they bound open discs that don't intersect $T$. 
Hence all loops $\gamma$ in $D_{a_i}\cap T$ are either meridians or longitudes of $T$. 
However, we can consider that they are all meridians of $T$.
Because if a loop $\gamma$ in $D_{a_i}\cap T$ is not a meridian of $T$, then $T$ is trivial and $\gamma$ is the longitude of $T$. 
However, $T$ bounds a second solid torus $V$ so that $\gamma$ is the meridian of $\partial V$. So, we can assume that $\gamma$ is a meridian of $T$. Therefore, we can consider that all loops in $D_{a_i}\cap T$ are meridians of $T$.
\end{proof}

\begin{lemma}\label{twice,meridian}
If $M(K; C_1, \dots, C_r)$  has an essential torus $T$ that is not essential when embedded in the exterior of $K$, then $T$ intersects at least twice one disc $D_{r_i}$ by meridians. 
\end{lemma}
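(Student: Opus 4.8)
The plan is to strengthen Lemma~\ref{trivial,meridian} by a parity (checkerboard) argument on the intersection pattern $T\cap D_i$, using the fact that the cabling number satisfies $q>1$. By Lemma~\ref{trivial,meridian} there is a disc, say $D_i$, that $T$ meets, and every loop of $T\cap D_i$ is a meridian of the solid torus $V$ bounded by $T$ (the companion solid torus of the generalized $q$-cabling $K'$ produced in that proof); moreover each such loop is essential in $D_i$, hence bounds a subdisc of $D_i$ that is punctured by $K$. First I would record that $K$ lies in $V$: for a meridian $\gamma\subset T\cap D_i$, computing $lk(K,\gamma)$ with the punctured subdisc of $D_i$ that $\gamma$ bounds gives a positive number (the braid is positive and $\gamma$ is essential), whereas computing it with a meridian disc of $V$ would give $0$ if $K$ were contained in $W=S^3\setminus \mathrm{int}\,V$. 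So $K\subset V$, and every puncture of $K$ on $D_i$ lies on the $V$-side of $T$.

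Next I would colour the regions of $D_i\setminus (T\cap D_i)$ by which side of $T$ they lie on: adjacent regions (across a loop of $T\cap D_i$) receive opposite colours, and every puncture of $K$ sits in a $V$-coloured region. Let $\gamma$ be the outermost loop of $T\cap D_i$, that is, the annulus $A\subset D_i$ between $\gamma$ and $C_i$ contains no other loop. If $C_i$ lies in $V$, then $A$ is $V$-coloured and the region of $D_i$ immediately inside $\gamma$ is $W$-coloured, hence unpunctured; but $\gamma$ is essential, so $K$ punctures $D_i$ somewhere inside $\gamma$, and those punctures can only lie in a deeper $V$-coloured region. Crossing from the $W$-region into such a region forces a second loop of $T\cap D_i$ inside $\gamma$, so $|T\cap D_i|\geq 2$, as desired. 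The same conclusion is immediate if $C_i\subset W$ and $T\cap D_i$ already has at least two loops.

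The remaining, and genuinely delicate, case is $C_i\subset W$ with $T\cap D_i$ a single meridian $\gamma$; I expect this to be the main obstacle. Here the annulus $A$ between $C_i$ and $\gamma$ lies in $W$, so it is unpunctured (as $K\subset V$), and thus $C_i$ is isotopic in the complement of $K$ to the meridian $\gamma$ of $V$, encircling exactly the $q$ strands met by a meridian disc of $V$. I would rule this out using the hypothesis that $T$ is \emph{inessential} in the exterior of $K$: a full twist along such a $C_i$ is, up to isotopy in $W$, a meridional twist of $V$, which fixes $V$ and its core $\kappa$ and only alters the framing of the cabling. Since $T$ is essential in the exterior of $K'$, the core $\kappa$ is knotted; hence $K$ itself would be a generalized $q$-cabling with $q>1$ of the knotted core $\kappa$, making $T$ essential in the exterior of $K$ and contradicting the hypothesis. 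Therefore this configuration cannot occur, and in every admissible case $T$ meets some $D_i$ in at least two meridians.
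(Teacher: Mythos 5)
Your argument is correct in its main lines but follows a genuinely different route from the paper. The paper argues by cases on \emph{how many} of the discs $T$ meets: if exactly one disc is met, and met only once, it shows $T$ would be boundary parallel to $\partial N(C_{r_i})$ when $r=1$, and for $r>1$ it reduces to Lemma~\ref{trivial,meridian} applied to the sublink obtained by deleting $C_{r_i}$; if two discs are each met exactly once, then $C_{r_i}$ and $C_{r_j}$ are both isotopic to a meridian of $T$, hence to each other, contradicting the standing hypothesis that the circles are pairwise non-isotopic in the exterior of $K$. Your checkerboard argument replaces all of this: it localizes the count to the single disc supplied by Lemma~\ref{trivial,meridian}, disposes of the case $C_i\subset V$ by pure parity, and never needs the non-isotopy hypothesis. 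What your route buys is a cleaner and in some sense stronger statement; what it costs is the residual case ($C_i\subset W$, a single intersection circle), where you must re-run the twisting-plus-Ito argument of Lemma~\ref{trivial,meridian} from scratch, whereas the paper recycles that lemma as a black box on a sublink.

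Two points in your residual case need to be shored up. First, the assertion that $T$ remains essential in the exterior of $K'$ after the $(+1)$-fillings is not automatic --- Dehn filling can render an essential torus compressible or boundary parallel --- and you need this both to invoke Ito's theorem and to conclude that the core $\kappa$ is knotted (if $V$ were unknotted, $T$ would compress on the $W$ side of the filled manifold). The paper's own remedy in Lemma~\ref{noessentialtorus} is to take a sufficiently high positive slope, which still produces a positive braid with a full twist; you should do the same here. Second, your contradiction requires \emph{every} twist, not just the one along $C_i$, to preserve $T$ and the meridian fibration of $V$. This does hold, but only because, under the assumption that no disc meets $T$ twice, your own parity argument forces every disc meeting $T$ to do so in a single concentric meridian with its boundary circle in $W$; so the residual case should be organized as a single global contradiction over all the discs simultaneously rather than disc by disc. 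With those two adjustments the proof goes through at the same level of rigor as the paper's.
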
 

\begin{proof} 
By Lemma~\ref{trivial,meridian}, $T$ intersects at least one disc $D_{r_i}$ by meridians. By contradiction, suppose that $T$ intersects each disc $D_{r_j}$ at most once.

Suppose first that $T$ intersects only one $D_{r_i}$. The link components $K$ and $C_{r_i}$ are in different sides of $M(K; C_1, \dots, C_r)-T$. Suppose there is a different trivial link component $C_{r_j}$ in the same side as $C_{r_i}$. Since $T$ doesn't intersect $D_{r_j}$, this implies that $lk(C_{r_j}, K) = 0$, a contradiction. Hence all trivial link components $C_{r_j}$ different from $C_{r_i}$ are in the same side as $K$. This implies that one side of $M(K; C_1, \dots, C_r)-T$ contains only $D_{r_i}$. If $r = 1$, then $T$ must be knotted otherwise $T$ would be boundary parallel to $N(C_{r_i}),$ which is not possible. This would imply that $T$ is an essential torus for $K$, a contradiction. So $r>1$. But then $T$ becomes an essential torus for the link $K \cup C_1 \cup \dots \cup C_{r_i-1} \cup C_{r_i+1} \cup \dots \cup C_{r}$ that doesn't intersect any $C_1, \dots, C_{r_i-1}, C_{r_i+1}, \dots, C_{r}$ contradicting Lemma~\ref{trivial,meridian}.
 
Consider now that that $T$ intersects at least two $D_{r_i}$ and $D_{r_j}$. Then, $C_{r_i}$ and $C_{r_j}$ are isotopic to a meridian of $T$. But this implies that $C_{r_i}$ and $C_{r_j}$ are isotopic to each other contradicting the assumption that any two trivial link components $C_{r_i}$ and $C_{r_j}$ are not isotopic to each other in the exterior of $K$.

Therefore, we conclude that $T$ intersects at least twice one disc $D_{r_i}$ by meridians. 
\end{proof}

\begin{lemma}\label{noessentialtorus}
There is no essential torus $T$ in $M(K; C_1, \dots, C_r)$ that is not essential when embedded in the exterior of $K$.
\end{lemma}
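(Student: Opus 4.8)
The plan is to argue by contradiction: assume such an essential torus $T$ exists and isotope it within $M(K;C_1,\dots,C_r)$ so that it meets $\bigcup_i D_{r_i}$ minimally. By Lemma~\ref{twice,meridian} some disc $D_{r_i}$ then meets $T$ in at least two meridians of $T$, and I will use these to produce a reducing isotopy (or a compressing disc), contradicting the essentiality of $T$. Note that Proposition~\ref{proposition0} already gives irreducibility and boundary irreducibility, so only incompressibility and non-boundary-parallelism are at issue.

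First I would record the linking data. Since $T$ is embedded in $M(K;C_1,\dots,C_r)$ it is disjoint from $K$ and from every $C_l$, so any two of the meridional curves in $T\cap D_{r_i}$ are freely homotopic in $S^3-K$ and hence have the same linking number $p$ with $K$. Each such meridian $m$ bounds a subdisc $\Delta\subset D_{r_i}$, and because $B$ is a positive braid every strand of $K$ crosses $\Delta$ with the same sign; thus $p$ equals the geometric number of strands of $K$ through $\Delta$ and satisfies $0\le p\le r_i$. If $p=0$ then $\Delta$ is disjoint from $K$, and since $\Delta\subset D_{r_i}$ may be taken disjoint from the remaining circles $C_l$, it would be a compressing disc for $T$ in $M(K;C_1,\dots,C_r)$, contradicting incompressibility; hence $1\le p\le r_i$. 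Ordering the meridians $m_1,\dots,m_k$ ($k\ge 2$) by the radii of the subdiscs $\Delta_1\subset\dots\subset\Delta_k$ they bound in $D_{r_i}$, the equality $lk(m_j,K)=p$ for all $j$ forces the central disc $\Delta_1$ to carry $p$ strands, the outermost annulus $D_{r_i}\setminus\Delta_k$ to carry $r_i-p$ strands, and every annular region of $D_{r_i}$ lying between two consecutive meridians to be disjoint from $K$.

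The key step is then a reducing isotopy. Take two consecutive meridians $m_j,m_{j+1}$ cobounding such an unpunctured annulus $A'\subset D_{r_i}$; on $T$ they cobound an annulus $A$, and $A\cup_{\partial}A'$ is a torus disjoint from $K$. I would show that $A$ and $A'$ are parallel rel $\partial$ through a solid torus $W$ disjoint from all of $L$: since all strands of $K$ sit either in $\Delta_1$ or in the outermost annulus, none of $K$ lies in the thin collar $W$ between $A$ and $A'$, and the circles $C_l$ can be kept off $W$. Isotoping $A$ across $W$ onto $A'$ and slightly beyond removes the two curves $m_j,m_{j+1}$ from $T\cap D_{r_i}$, strictly lowering $|T\cap\bigcup_i D_{r_i}|$ and contradicting minimality, equivalently Lemma~\ref{twice,meridian}.

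The main obstacle I anticipate is justifying this last isotopy rigorously. Although the meridians are nested in $D_{r_i}$, as parallel curves on $T$ they need not be cyclically adjacent there, so the annulus $A\subset T$ cobounding $A'$ may not be the one I want; some care is needed to choose a pair adjacent simultaneously in $D_{r_i}$ and on $T$, and to verify that $W$ is an embedded solid torus missing $K$ and every $C_l$. An alternative that sidesteps the isotopy is to perform $(1)$-Dehn filling along each $C_i$, as in the proofs of Lemmas~\ref{trivial,meridian} and \ref{twice,meridian}, turning $B$ into a positive braid $B'$ with a full twist for which Ito \cite{ito2024satellite} forbids $T$ from meeting the braid axis; $T$ would then exhibit $K'$ as a generalized $q$-cabling of the core of $T$, and undoing the filling would make $K$ a generalized $q$-cabling as well, contradicting the hypothesis that $T$ is inessential in the exterior of $K$. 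I would use whichever of these keeps the adjacency bookkeeping cleanest.
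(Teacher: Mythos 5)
Your proposal contains a genuine gap on both of the routes you sketch. The primary route (the reducing isotopy) is left unfinished at exactly the points that matter, as you yourself note: you need a pair of meridians that are simultaneously adjacent in $D_{r_i}$ and on $T$, you need the region $W$ between $A$ and $A'$ to be a product (solid torus) disjoint from $K$ and from every $C_l$, and you need the isotopy across $W$ not to increase intersections with the other discs $D_{r_j}$. None of these is verified. There is also an unacknowledged problem: the curves of $T\cap D_{r_i}$ are disjoint circles in a disc and need not be nested, so your ordering $\Delta_1\subset\dots\subset\Delta_k$ and the resulting ``unpunctured annulus between consecutive meridians'' may simply not exist (two meridians can bound disjoint subdiscs of $D_{r_i}$, each carrying $p$ strands). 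The paper does not attempt this isotopy argument at all.

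Your alternative route is essentially the paper's argument (twist along the $C_i$, apply Ito to the resulting positive braid with a full twist, conclude that $K'$ and hence $K$ is a generalized $q$-cabling of the core of $T$), but your final step is wrong: being a generalized $q$-cabling of the core of $T$ does \emph{not} contradict $T$ being inessential in the exterior of $K$. If $T$ is unknotted, the complementary solid torus has a meridian disc disjoint from $K$, so $T$ is compressible in $S^3-N(K)$ even though $K$ is a generalized $q$-cabling of its core; this is exactly the case your contradiction fails to exclude, and it is the case the paper must work to rule out. The paper closes it by observing that the inessentiality hypothesis forces $T$ to be unknotted, whence $K$ is a closed braid on $q$ strands with $q\le r_i/u\le n/2<n$, contradicting the fact that a positive braid with a full twist realizes the braid index (Franks--Williams). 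The paper also treats separately the case where each meridian disc meets $K$ exactly once (there the contradiction is $q=1$ versus Ito's $q>1$), and it must first ensure $T$ becomes \emph{knotted} after a sufficiently high-slope twist so that Ito's theorem applies to an essential torus. Without the Franks--Williams step and this case analysis, the argument does not close.
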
 

\begin{proof}
Suppose that $M(K; C_1, \dots, C_r)$ has an essential torus $T$ that is not essential when embedded in the exterior of $K$. Then, $T$ intersects one disc $D_{r_i}$ at least twice by meridians by Lemma~\ref{twice,meridian}.

The intersection between $T$ and $D_{r_i}$ is formed by circles $T_{C^1}, \dots, T_{C^u}$ with $u\geq 2$. Denote by $T_{D^1}, \dots, T_{D^u}$ the discs that $T_{C^1}, \dots, T_{C^u}$ bound, respectively. We have that $D_{r_i} \cap T(p, q) = (T_{D^1}\cup \dots \cup T_{D^u})\cap T(p, q)$.
Furthermore, each $T_{D^j}$ intersects $K$ in the same amount of points for all $j$ because the linking number between $T_{C^j}$ and $K$ must be persevered. So, the number of points in $T_{D^j}\cap K$ is constant for all $j$. 

Consider that each $T_{D^j}$ intersects $K$ once. We apply a Dehn filling along $C_j$ with high positive slope, if necessary, to transform $T$ into a knotted torus $T'$. The braid $B$ is transformed into a new positive braid $B'$ with at least one full twist. By Ito \cite[Theorem 5.5, Lemma 3.5]{ito2024satellite}, $T'$ doesn't intersect the braid axis of $B'$. This means that the closure of $B'$ is a generalized $q$-cabling of the core of $T'$ with $q>1$. But this contradicts the fact that $T_{D^j}$ intersects $K$ once, since $T_{D^j} \cap K$ should be formed by $q$ points and $q>1$.


Suppose now that each $T_{D^j}$ intersects $K$ in $q$ points with $1<q<r$. We apply a Dehn filling along $C_i$ with high positive slope, if necessary, to transform $T$ into a knotted torus $T'$. The braid $B$ is transformed into a new positive braid $B'$ with at least one full twist. By \cite{ito2024satellite}, $T'$ doesn't intersect the braid axis of $B'$.  This means that the closure of $B'$ is a generalized $q$-cabling of the core of $T'$ with $q>1$. As a consequence, this implies that $K$ is a generalized $q$-cabling of the core of $T$. So $T$ is unknotted, as it is inessential in the exterior of $K$. But after deforming $T$ to the trivial torus with one strand, the knot $K$ is given by a braid with fewer strands than the number of strands in $B$, but this is a contradiction since $B$ is a minimal braid for $K$ because a positive braid with at least one positive full twist is a minimal braid by Franks and Williams \cite[Corollary 2.4]{Franks}.
\end{proof}

\begin{lemma}\label{torusandhyperbolic}
If $K$ is a torus or hyperbolic knot, then $M(K; C_1, \dots, C_r)$ is atoroidal.
\end{lemma}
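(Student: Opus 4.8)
The plan is to combine Lemma~\ref{noessentialtorus} with the classical fact that the complements of torus knots and hyperbolic knots are themselves atoroidal. Suppose, for contradiction, that $M(K; C_1, \dots, C_r)$ contains an essential torus $T$. I would immediately split into two cases according to whether $T$ remains essential once the circles $C_1, \dots, C_r$ are forgotten and $T$ is regarded as a surface embedded in the exterior $S^3 - N(K)$ of $K$.

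If $T$ is inessential when embedded in the exterior of $K$, then $T$ is precisely the type of torus that Lemma~\ref{noessentialtorus} rules out, so this case cannot occur. Hence $T$ must be essential in $S^3 - N(K)$. But if $K$ is a torus knot, then its exterior is a Seifert fibered space and contains no essential torus; and if $K$ is hyperbolic, then its exterior admits a complete finite-volume hyperbolic structure and so is also atoroidal. In either case $S^3 - N(K)$ contains no essential torus, contradicting the existence of $T$. Therefore $M(K; C_1, \dots, C_r)$ has no essential torus, i.e.\ it is atoroidal.

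I expect that essentially all of the genuine content has already been absorbed into the earlier lemmas. The only step that carries any subtlety is the dichotomy itself: one must be sure that an essential torus of $M(K; C_1, \dots, C_r)$ either stays essential after the $C_i$ are Dehn filled back in, or else falls into the excluded class handled by Lemma~\ref{trivial,meridian}, Lemma~\ref{twice,meridian}, and Lemma~\ref{noessentialtorus}. Since that chain of lemmas shows no essential torus of the second kind exists, the remaining appeal is just to the well-known geometric structure of torus-knot and hyperbolic-knot complements, and the statement follows directly.
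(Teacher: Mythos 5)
Your proof is correct and follows exactly the paper's argument: Lemma~\ref{noessentialtorus} eliminates the case where the torus is inessential in the exterior of $K$, and the atoroidality of torus-knot and hyperbolic-knot complements eliminates the other case. No further commentary is needed.
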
 

\begin{proof} 
By Lemma~\ref{noessentialtorus}, if $M(K; C_1, \dots, C_r)$ has an essential torus $T$, then $T$ is essential in the exterior of $K$. However, this is not possible because torus and hyperbolic knots don't have any essential torus in their exteriors.  
\end{proof} 

\begin{lemma}\label{satellite}
If $K$ is a satellite knot, then $M(K; C_1, \dots, C_r)$ is atoroidal if and only if whenever $K$ is a generalized $q$-cabling of the core of a knotted torus $T$, then $T$ intersects at least one $C_i$.
\end{lemma}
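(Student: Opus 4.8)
\section*{Proof proposal}

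The plan is to reduce the word \emph{atoroidal} to a concrete statement about knotted cabling tori and then match that to the stated condition. The key is the following dictionary: $M(K;C_1,\dots,C_r)$ contains an essential torus if and only if there is a knotted torus $T$, for which $K$ is a generalized $q$-cabling with $q>1$ of its core, that can be positioned disjointly from every $C_i$. Indeed, by Lemma~\ref{noessentialtorus} every essential torus in $M(K;C_1,\dots,C_r)$ is also essential in the exterior of $K$, and since $B$ is a positive braid with a full twist, Ito's theorem \cite{ito2024satellite} shows that such a torus misses the braid axis and therefore exhibits $K$ as a generalized $q$-cabling of the core of the knotted solid torus it bounds, in the sense of Definition~\ref{cabling}. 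Since the lemma's biconditional is logically the negation-by-negation of this dictionary, proving both directions of the dictionary proves the lemma.

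For one direction I would show that a knotted cabling torus $T$ (with $q>1$) that is disjoint from all $C_i$ is essential in $M(K;C_1,\dots,C_r)$, hence witnesses the failure of atoroidality. Incompressibility is automatic: a compressing disc in $M(K;C_1,\dots,C_r)$ avoids $K$ and would already compress $T$ in the exterior of $K$, where $T$ is essential. The substance is ruling out boundary-parallelism. Writing $T=\partial V$ with $V$ the knotted companion solid torus containing $K$ and $W=S^3\setminus\mathrm{int}\,V$ the exterior of its nontrivial core, I would inspect each boundary component $T$ could be parallel to. Parallelism to $\partial N(K)$ would place the product region inside the exterior of $K$, contradicting that $T$ is essential there. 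Parallelism to some $\partial N(C_j)$ with $C_j\subset W$ would give $W\setminus N(C_j)\cong T^2\times I$, forcing $W$ to be a solid torus and hence its core unknotted, a contradiction. Finally, if $C_j\subset V$, the product region would be $V\setminus N(C_j)$, which contains all of $K$; but a product region realizing boundary-parallelism meets no component of $L$, while $K\subset V\setminus N(C_j)$, again a contradiction. Thus $T$ is essential in $M(K;C_1,\dots,C_r)$.

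For the other direction, suppose $M(K;C_1,\dots,C_r)$ is not atoroidal and choose an essential torus $T$. By Lemma~\ref{noessentialtorus} it is essential in the exterior of $K$, and Ito's theorem \cite{ito2024satellite} again makes $K$ a generalized $q$-cabling with $q>1$ of the core of the knotted torus $T$. Since $T$ is essential in $M(K;C_1,\dots,C_r)$ it is embedded there, hence can be isotoped off every $N(C_i)$, so $T$ is disjoint from all $C_i$. This produces exactly a knotted cabling torus missing every $C_i$, completing the dictionary and thus the equivalence.

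I expect the main obstacle to be the boundary-parallel analysis in the first direction, and within it the case $C_j\subset V$: one must check that drilling the augmenting circle $C_j$ cannot collapse the companion solid torus into a product $T^2\times I$. This is where the geometry of the $C_j$ enters, since each $C_j$ is an unknotted circle with nonzero linking number with $K$ (the same linking constraint used in Proposition~\ref{proposition0}), so $K$ genuinely threads $V$ and cannot be absorbed into $N(C_j)$; this is what makes the product-region containment argument decisive.
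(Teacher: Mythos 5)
Your proposal is correct and follows essentially the same route as the paper: reduce to the contrapositive "toroidal iff some knotted cabling torus misses every $C_i$," use Lemma~\ref{noessentialtorus} together with Ito's theorem to get the cabling structure from an essential torus, and observe that a cabling torus disjoint from the $C_i$ is essential in $M(K;C_1,\dots,C_r)$. Your case analysis ruling out boundary-parallelism of $T$ in $M(K;C_1,\dots,C_r)$ is actually more detailed than the paper, which asserts that direction in one line.
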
 

\begin{proof} 
We prove this lemma by proving its contrapositive which says that $M(K; C_1, \dots, C_r)$ is toroidal if and only if $T$ doesn't intersect any $C_i$ whenever $K$ is a generalized $q$-cabling of the core of a knotted torus $T$.

Consider first that $K$ is satellite knot that is a generalized $q$-cabling of the core of a knotted torus $T$ that doesn't intersect any $C_i$, then $T$ is also an essential torus for $M(K; C_1, \dots, C_r)$. Hence $M(K; C_1, \dots, C_r)$ is toroidal.

Consider now that $M(K; C_1, \dots, C_r)$ has an essential torus $T$.
The torus $T$ is  essential when embedded in the exterior of $K$ by Lemma~\ref{noessentialtorus}.
Since $B$ is a positive braid with at least one full twist, $T$ doesn't intersect the braid axis of $B$ by \cite{ito2024satellite}. Hence $K$ is a generalized $q$-cabling of the core of $T$ with $q>1$. As a consequence, $T$ doesn't intersect any $C_i$, as we want.
\end{proof}

\begin{proposition}\label{atoriodal}
Let $B$ be a positive braid with $n$ strands and at least one positive full twist representing a knot $K$. Consider a collection of unlinked circles $C_1, \dots, C_m$ that are not isotopic to each other in the exterior of the braid closure of $B$ with each $C_i$ encircling $r_i$ strands of $B$ with $1<r_i\leq n$ such that after applying a positive full twist along each $C_i$ we obtain a positive braid diagram with at least one full twist from $B$. We denote the link $K \cup C_1 \cup \dots \cup C_m$ by $L$.
\begin{itemize}
\item If $K$ is a torus or hyperbolic knot, then the link $L$ is always atoroidal.  

\item If $K$ is a satellite knot, then the link $L$ is atoroidal if and only if whenever $K$ is a generalized $q$-cabling of the core of a knotted torus $T$, then $T$ intersects at least one $C_i$.
\end{itemize} 
\end{proposition} 

\begin{proof} 
It follows from Lemmas~\ref{torusandhyperbolic} and ~\ref{satellite}.
\end{proof}

\section{Essential annuli}

In this section we characterize when $M(K; C_1, \dots, C_r)$ has no essential annuli. 


\begin{lemma}\label{proposition1}
$M(K; C_1, \dots, C_r)$ contains an essential annulus with one boundary component in $\partial N(K)$ and the other in $\partial N(C_i)$ if and only if $K$ is a torus knot, $r = 1$ and $C_1$ is the braid axis of $B$.
\end{lemma}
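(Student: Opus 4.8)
The plan is to prove the two implications separately, with the forward (``only if'') direction carrying essentially all of the difficulty.

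For the reverse direction, suppose $K=T(p,q)$ and $C_1$ is the braid axis of $B$. Then $K$ lies on the Heegaard torus $\Sigma=\partial N(C_1)$, the exterior $S^3\setminus N(K)$ is Seifert fibered over a disc with two cone points, and $C_1$ is the core of one of the two solid tori bounded by $\Sigma$, which is an exceptional fibre of that fibration. Deleting $N(C_1)$ as well, $M(K;C_1)=S^3\setminus N(K\cup C_1)$ is Seifert fibered over an annulus carrying a single cone point, whose two boundary circles correspond to $\partial N(K)$ and $\partial N(C_1)$. I would take the vertical annulus $A$ lying over an essential arc in this base annulus joining its two boundary circles, so that $A$ has one boundary component on each of $\partial N(K)$ and $\partial N(C_1)$. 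Since the arc is essential and the disc it cuts off in the base contains the cone point, $A$ is incompressible and not boundary parallel, hence essential.

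For the forward direction, suppose $A\subset M(K;C_1,\dots,C_r)$ is an essential annulus with $\partial A=\gamma_0\cup\gamma_1$, $\gamma_0\subset\partial N(K)$ and $\gamma_1\subset\partial N(C_i)$. Writing $\gamma_0=c\mu_K+d\lambda_K$ and $\gamma_1=a\mu_i+b\lambda_i$, I would first extract the constraints forced by linking numbers: since $A$ makes $\gamma_0$ and $\gamma_1$ homologous in the exterior of $L$, comparing $lk(\cdot,C_i)$ and $lk(\cdot,K)$ across the annulus, and using $lk(K,C_i)=r_i$ together with $lk(\mu_K,K)=lk(\mu_i,C_i)=1$ and $lk(\lambda_K,K)=lk(\lambda_i,C_i)=0$, yields $a=d\,r_i$ and $c=b\,r_i$. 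Primitivity of $\gamma_0$ and $\gamma_1$ then gives $\gcd(b,r_i)=\gcd(d,r_i)=\gcd(b,d)=1$; in particular $b\neq0$ and $d\neq0$, so neither boundary curve is a meridian (if it were, capping $A$ off with a meridian disc would exhibit a longitude of $K$ bounding a disc, forcing $K$ to be trivial, which is impossible for a positive braid with a full twist). Because $A$ is an embedded annulus, $\gamma_0$ and $\gamma_1$ are isotopic knots in $S^3$. Now I use that $C_i$ is unknotted: $\partial N(C_i)$ is standardly embedded, so $\gamma_1$ is the torus knot $T(d\,r_i,b)$; while $\gamma_0$ is the $(c,d)$--curve on $\partial N(K)$, a cable wrapping $d$ times longitudinally. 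If $|d|\geq2$ this cable is a genuine satellite with companion the nontrivial knot $K$, whence $\partial N(K)$ is essential in its complement and $\gamma_0$ cannot be a torus knot, contradicting $\gamma_0\simeq\gamma_1$. Hence $|d|=1$, $\gamma_0$ is a longitude, and $K\simeq\gamma_0\simeq\gamma_1=T(r_i,b)$ is a torus knot lying on $\partial N(C_i)$ as its $(r_i,b)$--curve, with $C_i$ the core of the complementary solid torus. As the braid index of $T(r_i,b)$ is $\min(r_i,b)$ and $B$ is a minimal braid on $n$ strands by Franks--Williams \cite{Franks} with $r_i\leq n$, I get $r_i=n$ and $b>n$, so $C_i$ is the braid axis of the (minimal, hence standard up to conjugacy) torus braid for $K$.

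Finally, to force $r=1$, suppose some $C_j$ with $j\neq i$ were present. Cutting the Seifert fibered space $S^3\setminus N(K\cup C_i)$ along the vertical annulus from the reverse direction leaves a solid torus $W$ that is a fibered neighbourhood of the second exceptional fibre, whose core $c_2$ satisfies $lk(c_2,K)=b$; being disjoint from the annulus, $C_j$ must lie in $W$. But then $lk(C_j,K)$ is either $0$ (if $C_j$ is trivial in $W$) or a nonzero multiple of $b>n$, so it cannot equal $r_j\in(1,n]$, a contradiction. Hence $r=1$. I expect the main obstacle to be the middle step, upgrading the mere existence of $A$ to the rigid conclusion that $K$ is a torus knot with $C_i$ its axis: the delicate points are pinning down the longitudinal coefficient $d$ via the satellite argument (which relies on $K$ being nontrivial and on a nontrivial cable of a nontrivial knot never being a torus knot) and correctly reading off $\gamma_1$ as a torus knot on the unknotted $\partial N(C_i)$. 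The linking-number bookkeeping and the concluding $r=1$ argument are comparatively routine once the Seifert fibered picture is in hand.
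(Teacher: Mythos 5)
Your reverse direction and the bulk of your forward direction are correct, though organized differently from the paper. For the ``if'' direction the paper builds the annulus explicitly (isotope $K$ onto $\partial N(C_1)$ using Los's result, then span an annulus between that copy and a longitudinal push-off on $\partial N(K)$), whereas you obtain essentially the same annulus as a vertical annulus in the Seifert fibred structure of $S^3\setminus N(K\cup C_1)$; both are fine. In the forward direction, your homological bookkeeping ($a=d\,r_i$, $c=b\,r_i$, primitivity forcing $b\neq 0$, $d\neq 0$), the satellite argument forcing $|d|=1$ (the paper gets the same conclusion by first excluding that the curve on $\partial N(C_i)$ is trivial via linking numbers and then citing Tsau's atoroidality of torus knot exteriors), and the Franks--Williams minimality argument pinning down $r_i=n$, $b>n$ all match the paper's ingredients, just in a different order: the paper establishes $r=1$ \emph{before} identifying $K$ as a torus knot with $C_1$ its axis, while you do it last.

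The genuine gap is precisely in that last step, where you force $r=1$. You cut $S^3\setminus N(K\cup C_i)$ along ``the vertical annulus from the reverse direction'' and assert that $C_j$, ``being disjoint from the annulus,'' must lie in the solid torus $W$. But the hypothesis only makes $C_j$ disjoint from the essential annulus $A$ you were given; the vertical annulus is a \emph{different} annulus, and although it is isotopic to $A$ in $S^3\setminus N(K\cup C_i)$, that isotopy takes place in the complement of $K\cup C_i$ only and may sweep across $C_j$. So the containment $C_j\subset W$ is unjustified as written. The step is repairable: in this Seifert fibred space no horizontal annuli exist (the base orbifold, an annulus with one cone point, has negative orbifold Euler characteristic), so $A$ itself is isotopic to a vertical annulus; the resulting ambient isotopy of $S^3$ fixes $N(K)\cup N(C_i)$, carries $C_j$ to some $C_j'$ disjoint from the vertical annulus, and preserves $lk(C_j, K)$ and $lk(C_j, C_i)$, after which your estimate $lk(C_j', K)\in b\,\ZZ$ against $0<lk(C_j,K)\leq n<b$ gives the contradiction. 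But note the paper's own argument for $r=1$ avoids all of this machinery with a one-line linking computation: since $\gamma_0$ wraps once longitudinally around $K$, one has $lk(C_j,\gamma_1)=lk(C_j,\gamma_0)=lk(C_j,K)\neq 0$, while $\gamma_1\subset\partial N(C_i)$ and $lk(C_j,C_i)=0$ force $lk(C_j,\gamma_1)=b\cdot lk(C_j,C_i)=0$, a contradiction; you may want to substitute that argument for your cut-and-wind one.
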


\begin{proof}
Suppose $K$ is a torus knot, $r = 1$ and $C_1$ is the braid axis of $B$. The braid $B$ is a positive braid with $n$ strands and at least one full twist. This implies that $B$ is a minimal braid \cite[Corollary 2.4]{Franks} and $K$ is the torus knot $T(n, q)$ with $n$ being the amount of strands of $B$ and $q>n$. There is an isotopy in the complement of the braid axis $C_1$ that takes $K$ to $T(n, q)$ by Los \cite{Los}. Then, we place $T(n, q)$ into $\partial N(C_1)$ by applying the isopoty that changes the torus knot $T(n, q)$ to the torus knot $T(q, n)$. We push the torus knot $T(q, n)$ a little out of $\partial N(C_1)$ and consider the annulus with one boundary forming the torus knot $T(q, n)$ in $\partial N(C_1)$ and the other boundary wrapping one time along the longitude of $\partial N(T(q, n))$. This annulus is an essential annulus in $M(K; C_1)$.

Consider now that $M(K; C_1, \dots, C_r)$ contains an essential annulus $A$ with one boundary component $\partial_1 A$ in $\partial N(C_i)$ and the other $\partial_2 A$ in $\partial N(K)$. Since $C_i$ is the unknot, the knot $\partial_1 A$ is a $(a, b)$-torus knot in $\partial N(C_i)$. Suppose first that $\partial_1 A$ is a trivial torus knot. 
Since $K$ is a non-trivial knot and $\partial_1 A$ and $\partial_2 A$ are isotopic through $A$, then $\partial_2 A$ is the meridian of $\partial N(K)$. This implies that the linking numbers $lk(\partial_2 A, K)$ and $lk(\partial_1 A, K)$ are equal to $1$. Since $lk(C_i, K) >1$, then $\partial_1 A$ can't wrap any time along the longitude of $\partial N(C_i)$. Hence $\partial_1 A$ is the meridian of $\partial N(C_i)$. However, this implies that $k(\partial_1 A, K)$ is equal to zero, a contradiction. So, $\partial_1 A$ is a non-trivial torus knot. 

Since $\partial_1 A$ can be embedded in $\partial N(K)$ and torus knots don't have an essential torus by Tsau \cite{Incompressible}, the knot $\partial_2 A$ wraps only one time along the longitude of $\partial N(K)$. Consider there is a circle $C_j$ in $C_1, \dots, C_r$ different from $C_i$. From $lk(C_j, K) \neq 0$ we conclude that $lk(C_j, \partial_1 A) \neq 0$ and from the fact that $C_j$ and $C_i$ are unlinked we conclude that $lk(C_j, \partial_1 A) = 0$, a contradiction. So, $r = 1$.

Because $\partial_2 A$ and $K$ are isotopic, this implies that $K$ can be embedded in $\partial N(C_1)$. 
So, $K$ is a torus knot of the form $T(n, q)$ with $n$ being the number of strands of $B$ and $q>n$. The knot $K$ is the torus knot $T(n, q)$ or $T(q, n)$ in $\partial N(C_1)$. We have that $K$ can't be the torus knot $T(n, q)$ in $\partial N(C_1)$ as it would imply that $lk(C_1, K)=q>n$, a contradiction. So, $\partial_1 A$ is the torus knot $T(q, n)$ in $\partial N(C_1)$. Hence $lk(C_1, K) = n$. Therefore, $C_1$ is the braid axis of $B$, as we want.
\end{proof}

\begin{lemma}\label{annulitwocomponent3}
$M(K; C_1, \dots, C_r)$ contains no essential annuli with boundary components in $C_i$ and $C_j$ with $i\neq j$.
\end{lemma}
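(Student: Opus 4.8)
The plan is to rule out such an annulus by a linking-number argument, exploiting that $C_i$ and $C_j$ are unknotted and unlinked and that, by hypothesis, the trivial components are pairwise non-isotopic in the exterior of $K$. So I would suppose for contradiction that $M(K;C_1,\dots,C_r)$ contains an essential annulus $A$ with $\partial_1 A \subset \partial N(C_i)$ and $\partial_2 A \subset \partial N(C_j)$ for some $i \neq j$. Since $A$ is essential, it is incompressible and not boundary parallel, so I may assume that $\partial_1 A$ and $\partial_2 A$ are essential simple closed curves on their respective boundary tori; otherwise a boundary curve bounding a disc on $\partial N(C_i)$ would let me compress $A$ or show it is boundary parallel. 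Writing $\mu_i, \lambda_i$ for the meridian and $0$-framed longitude of $C_i$, I would record $\partial_1 A = p_i\mu_i + q_i\lambda_i$ with $\gcd(|p_i|,|q_i|)=1$, and similarly $\partial_2 A = p_j\mu_j + q_j\lambda_j$, since $\partial_1 A$ and $\partial_2 A$ are torus knots on the unknotted tori.

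The key step is to show that both boundary curves are in fact longitudes. The annulus $A$ lies in the exterior $M(K;C_1,\dots,C_r)$, so it is disjoint from the core $C_i$; hence $\partial_1 A$ and $\partial_2 A$ are homologous in $S^3 \setminus C_i$ and must have equal linking number with $C_i$. On the one hand $lk(\partial_1 A, C_i) = p_i$, because $lk(\mu_i,C_i)=1$ while $lk(\lambda_i,C_i)=0$. On the other hand $\partial_2 A$ lies on $\partial N(C_j)$, and since $C_i$ and $C_j$ are unlinked one has $lk(\mu_j,C_i)=0$ and $lk(\lambda_j,C_i)=lk(C_j,C_i)=0$, so $lk(\partial_2 A, C_i)=0$. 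Therefore $p_i = 0$, and then $\gcd(|p_i|,|q_i|)=1$ together with essentiality of $\partial_1 A$ forces $q_i = \pm 1$, that is, $\partial_1 A = \pm\lambda_i$. Running the symmetric argument with $i$ and $j$ exchanged (using that $A$ is also disjoint from $C_j$) gives $p_j = 0$ and $\partial_2 A = \pm\lambda_j$.

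Finally I would convert this into a forbidden isotopy. The longitude $\partial_1 A = \pm\lambda_i$ is isotopic to the core $C_i$ inside $N(C_i)$, and likewise $\partial_2 A = \pm\lambda_j$ is isotopic to $C_j$ inside $N(C_j)$, both isotopies taking place in $S^3 \setminus N(K)$. Concatenating these with the isotopy from $\partial_1 A$ to $\partial_2 A$ carried by the annulus $A$, which also lies in $S^3 \setminus N(K)$, exhibits an isotopy from $C_i$ to $C_j$ in the exterior of $K$. This contradicts the standing assumption that the circles $C_1,\dots,C_r$ are pairwise non-isotopic in the exterior of $K$, which would complete the proof.

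The technically delicate points that I expect to be the main obstacle are the reduction to essential boundary curves and the passage from the cobounding annulus to an honest ambient isotopy of the cores $C_i$ and $C_j$ staying in $S^3 \setminus N(K)$; once the unlinkedness of $C_i$ and $C_j$ is in hand, the remaining linking-number computations are routine bookkeeping. Notably, this argument should need neither the positivity of the braid $B$ nor the presence of a full twist, so I would present it in the generality of arbitrary unknotted, unlinked, pairwise non-isotopic augmenting circles.
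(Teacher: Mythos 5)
Your proposal is correct and follows essentially the same route as the paper: both use the unlinkedness of $C_i$ and $C_j$ together with the fact that the annulus preserves linking numbers to force each boundary curve to be a longitude, and then read off from $A$ a forbidden isotopy between $C_i$ and $C_j$ in the exterior of $K$. Your version simply spells out the meridian--longitude bookkeeping and the reduction to essential boundary curves in more detail than the paper does.
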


\begin{proof}
Consider $A$ an annulus in $M(K; C_1, \dots, C_r)$ such that $\partial_1 A\subset \partial N(C_i)$ and $\partial_2 A\subset  \partial N(C_j)$.
We have that $lk(\partial_2 A, C_i) = 0$ because $C_i$ and $C_j$ are unlinked. Since $\partial_1 A$ and $\partial_2 A$ are isotopic, we also have that $lk(\partial_1 A, C_i) = 0$.
Therefore, $\partial_1 A$ is the longitude of $\partial N(C_i)$. For the same reason, $\partial_2 A$ is the longitude of $\partial N(C_j)$. So, $\partial_1 A$ is isotopic to $C_i$ and $\partial_2 A$ is isotopic to $C_j$.
Thus, $C_i$ and $C_j$ are isotopic through $A$. However, this contradicts the assumption that $C_i$ and $C_j$ are not isotopic in the exterior of $K$.
\end{proof}

\begin{lemma}\label{twocomponent3}
Consider that $K$ is not a torus knot or $K$ is a torus knot and $m > 1$ or $C_1$ is not the braid axis of $B$.
Then, $M(K; C_1, \dots, C_r)$ contains no essential annuli with boundary components in two different boundary components of $M(K; C_1, \dots, C_r)$.
\end{lemma}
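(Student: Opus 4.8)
The plan is to argue by a short case analysis on which pair of boundary components the annulus meets, and then to invoke the two preceding lemmas. The boundary of $M(K; C_1, \dots, C_r)$ is the disjoint union of the torus $\partial N(K)$ together with the augmentation tori $\partial N(C_1), \dots, \partial N(C_r)$. Hence an essential annulus $A$ whose two boundary circles lie in two \emph{different} boundary components of $M(K; C_1, \dots, C_r)$ must fall into exactly one of two types: either both boundary circles lie on augmentation tori, say $\partial_1 A \subset \partial N(C_i)$ and $\partial_2 A \subset \partial N(C_j)$ with $i \neq j$; or one boundary circle lies on $\partial N(K)$ and the other on some augmentation torus $\partial N(C_i)$. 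These two types exhaust all possibilities, since $\partial N(K)$ is a single component.

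First I would dispose of the first type directly: by Lemma~\ref{annulitwocomponent3} there are no essential annuli with boundary components in $\partial N(C_i)$ and $\partial N(C_j)$ for $i \neq j$, so no annulus of this type exists, regardless of the hypothesis.

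For the second type, I would appeal to Lemma~\ref{proposition1}, which asserts that $M(K; C_1, \dots, C_r)$ contains an essential annulus with one boundary in $\partial N(K)$ and the other in $\partial N(C_i)$ if and only if $K$ is a torus knot, $r = 1$, and $C_1$ is the braid axis of $B$. The hypothesis of the present lemma is exactly the logical negation of this triple conjunction: since $r \geq 1$, the condition ``$r \neq 1$'' coincides with ``$r > 1$'', so ``$K$ is not a torus knot, or ($K$ is a torus knot and $r>1$), or $C_1$ is not the braid axis of $B$'' is equivalent to the negation of ``$K$ is a torus knot and $r=1$ and $C_1$ is the braid axis''. Thus under our hypothesis no essential annulus of the second type can occur either. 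Combining the two cases yields the conclusion.

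I do not expect a genuine geometric obstacle here, since all of the hard topological work has already been carried out in Lemmas~\ref{proposition1} and~\ref{annulitwocomponent3}; the present statement is essentially a packaging of those two results. The only point requiring care is the bookkeeping of the logical negation in the second case, together with the observation that the two enumerated types genuinely exhaust all annuli joining two distinct boundary components (in particular that an annulus joining $\partial N(K)$ to itself, or one with both boundaries on a single $\partial N(C_i)$, does not fall under ``two different boundary components'' and so need not be considered).
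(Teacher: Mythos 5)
Your proposal is correct and matches the paper's argument exactly: the paper's proof of this lemma is simply the observation that it follows from Lemma~\ref{proposition1} and Lemma~\ref{annulitwocomponent3}, and your case split on where the two boundary circles lie, together with the logical negation of the triple condition in Lemma~\ref{proposition1}, is precisely the intended reasoning. (You also implicitly and correctly read the $m$ in the statement as the $r$ used elsewhere.)
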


\begin{proof}
It follows from Lemmas~\ref{proposition1} and \ref{annulitwocomponent3}.
\end{proof}

\begin{lemma}\label{annulusinK}
$M(K; C_1, \dots, C_r)$ has an essential annulus with boundaries in $K$ that is also essential in the exterior of $K$ if and only if $K$ is a torus knot, $m=1$ and $C_1$ is the braid axis of $B$ or $K$ is a cable knot on a knot $P$ such that the boundary of the tubular neighbourhood of $P$ doesn't intersect any $C_i$.
\end{lemma}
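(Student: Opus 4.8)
The plan is to pass between the annulus and the torus $\widehat T$ obtained by capping it off along $\partial N(K)$, and to split into the two alternatives according to whether $\widehat T$ is knotted. Write $E(K)=\mathbb{S}^3\setminus N(K)$ and $M=M(K;C_1,\dots,C_r)$, so $M\subset E(K)$. I will use throughout that, for an annulus $A\subset M$ with $\partial A\subset\partial N(K)$ that is essential in $E(K)$, essentiality in $M$ is automatic: a compressing or $\bdy$-compressing disc in $M$ also lies in $E(K)$, and a product region realising $\bdy$-parallelism of $A$ in $M$ has its outer annulus meeting $\partial A\subset\partial N(K)$, hence lying in $\partial N(K)$, so it would already realise $\bdy$-parallelism in $E(K)$. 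For the forward implication, let $A$ be essential in $M$ with $\partial A\subset\partial N(K)$ and essential in $E(K)$. Its boundary curves are parallel on $\partial N(K)$ and cobound an annulus $A'$ there; set $\widehat T=A\cup A'$. By the standard correspondence between essential annuli in a knot exterior and cable/torus structures (the annulus theorem and characteristic submanifold theory), $\widehat T$ is a torus disjoint from $K$ bounding a solid torus $W\ni K$ in which $K$ winds at least twice (the winding is at least two because $A$ is not $\bdy$-parallel). The decisive point is that $\widehat T$ misses every $C_i$: the piece $A$ lies in $M$ and so avoids the $C_i$, and $A'$ may be taken in a collar of $\partial N(K)$, which the $C_i$ also avoid.

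I now split on the isotopy type of $\widehat T$. If $\widehat T$ is knotted, $W$ is a knotted solid torus with nontrivial core $P$, so $K$ is a cable of $P$ with companion torus $\widehat T=\partial N(P)$, and the disjointness just noted is precisely the second alternative. If $\widehat T$ is unknotted it is a Heegaard torus $\Sigma$ splitting $\mathbb{S}^3$ into solid tori $V_1\supset c_1$ and $V_2\supset c_2$, where $c_1$ is the braid axis with $lk(c_1,K)=n$ and $lk(c_2,K)=q>n$ (here $q>n$ because $B$ is a positive braid with a full twist), and $K=T(n,q)$. Each $C_i$ misses $\Sigma$ and so lies in $V_1$ or $V_2$; reading off its class in that solid torus, $C_i\subset V_2$ would give $lk(C_i,K)\in q\ZZ$, impossible since $1<r_i\le n<q$, while $C_i\subset V_1$ gives $lk(C_i,K)=r_i\in n\ZZ$, forcing $r_i=n$ and $C_i$ isotopic to the core $c_1$, i.e.\ to the braid axis. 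As the $C_i$ are pairwise non-isotopic in $E(K)$, this leaves $r=1$ with $C_1$ the braid axis, the first alternative.

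For the converse I exhibit the annulus directly, using the principle of the first paragraph to upgrade essentiality in $E(K)$ to essentiality in $M$. If $K$ is a torus knot with $r=1$ and $C_1$ the braid axis, I take $A=\Sigma\setminus N(K)$ on the Heegaard torus: it misses the core $c_1=C_1$, so $A\subset M$, and it is essential in $E(K)$. If $K$ is a cable of $P$ with $\partial N(P)$ disjoint from all $C_i$, I take $A$ to be the cabling annulus in the cable space $N(P)\setminus N(K)$, whose boundary lies on $\partial N(K)$ and which is essential in $E(K)$; after arranging $A$ to be disjoint from every $C_i$ it lies in $M$. In both cases $A$ is then essential in $M$, as required.

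The main obstacle is twofold. In the forward direction it is the verification that capping $A$ off produces a genuine companion (or Heegaard) torus bounding a solid torus in which $K$ winds at least twice, rather than a compressible or $\bdy$-parallel torus; this is exactly where the hypothesis that $A$ is essential in $E(K)$ is used in full. In the converse, cable case, the delicate point is ``arranging $A$ to be disjoint from every $C_i$'': the hypothesis controls only the companion torus $\partial N(P)$, so I must rule out, or isotope away in $M$, any $C_i$ that lies inside the companion solid torus and punctures the cabling annulus, using that the $C_i$ encircle strands of the positive braid $B$. The linking-number bookkeeping separating the two cores in the unknotted case is, by comparison, routine once $q>n$ is in hand.
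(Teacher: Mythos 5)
Your overall strategy (cap off $A$ to a torus $\widehat T$ missing the $C_i$, then split on whether $\widehat T$ is knotted and do linking-number bookkeeping in the unknotted case) is essentially the paper's argument, but there is a genuine gap in the forward direction: you have omitted the composite case. The correct form of the ``standard correspondence'' is a trichotomy: an essential annulus in a knot exterior with boundary on $\partial N(K)$ forces $K$ to be a torus knot, a cable knot, \emph{or} a composite knot, and in the composite case the annulus is meridional and your parenthetical justification fails --- the capped-off torus is a swallow-follow torus in which $K$ has winding number $1$, not ``at least two,'' even though $A$ is not boundary parallel. So as written your argument would wrongly force a composite $K$ into the cable/torus alternatives. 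The paper closes this case by invoking primeness of closures of positive braids with a full twist (its citation to \cite{MR1457195}); you need that step, or some equivalent exclusion of meridional annuli, before your dichotomy on $\widehat T$ is legitimate.

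The second issue is one you flag yourself: in the converse, cable case you leave open how to make the cabling annulus disjoint from a $C_i$ that might sit inside the companion solid torus. This is not actually delicate, but your choice of model for $A$ creates the problem: take $A$ to be the complement of $K$ in the companion torus itself, i.e.\ isotope $K$ onto $\partial N(P)$ and set $A = \partial N(P) \setminus N(K)$, as the paper does. Then $A \subset \partial N(P)$, which by hypothesis misses every $C_i$, so disjointness is immediate and no isotopy in $M$ is needed; your first-paragraph principle then upgrades essentiality in $E(K)$ to essentiality in $M$ exactly as you intend. With that substitution and the primeness step above, the proof is complete and matches the paper's.
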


\begin{proof}
Consider that $K$ is a torus knot, $r=1$ and $C_1$ is the braid axis of $B$. Then, we embed $K$ into a trivial torus $T$ boundary parallel to $\partial N(C_1)$. Then, $T-K$ is an essential annulus in the exterior of $K$ that also lives in $M(K; C_1)$. 

Consider now that $K$ is a cable knot on a knot $P$ such that the boundary of the tubular neighbourhood of $P$ doesn't intersect any $C_i$. We embed $K$ into $\partial N(P)$. Then, $\partial N(P)-K$ is an essential annulus in the exterior of $K$ that also lives in $M(K; C_1, \dots, C_r)$. Hence $M(K; C_1, \dots, C_r)$ has an essential annulus in this case as well. 

Consider now that $M(K; C_1, \dots, C_r)$ has an essential annulus $A$ with boundaries in $K$ that is also essential in the exterior of $K$. So, $K$ is either a composite, cable, or torus knot. Since $K$ is the closure of a positive braid with a full twist, $K$ can't be a composite knot by Michael \cite{MR1457195}. 

Assume now that $K$ is a cable knot on a knot $P$. In this case, there is an annulus $B$ in $\partial N(K)$ that is glued with $A$ along their boundaries to form the essential torus $\partial N(P)$ in $M(K; C_1, \dots, C_r)$.  Hence $\partial N(P)$ doesn't intersect any $C_i$.

Finally consider that $K$ is a torus knot. There is an annulus $B$ in $\partial N(K)$ that is glued with $A$ along their boundaries to form an unknotted torus $T$ in which $K$ is embedded. All $C_i$ are in one solid torus bounded by $T$. Denote by $\omega_i$ the winding number of $C_i$ in the solid torus that it lies. The knot $K$ is a torus knot with parameters $(n, q)$ with $n$ being the number of strands of $B$ and $q>n$. The linking number $lk(C_i, K)$ between $C_i$ and $K$ is equal to $\omega_in$ or $\omega_iq$. By definition $lk(C_i, K)$ is at most $n$. So, $lk(C_i, K) = n$. Hence there is only one circle in $C_1, \dots, C_r$ and this circle $C_1$ is the braid axis of $B$.
\end{proof}

\begin{lemma}\label{proposition2}
Consider that if $K$ is a torus knot, then $r>1$ or $r=1$ and $C_1$ is not the braid axis of $B$, or if $K$ is a cable knot on a knot $P$, then the boundary of the tubular neighbourhood of $P$ intersects at least one $C_i$.
If $M(K; C_1, \dots, C_r)$ has an essential annulus $A$, then $\partial A$ are contained in just one boundary component of 
$\partial M(K; C_1, \dots, C_r)$. Furthermore, $A$ is inessential in the exterior of this boundary component.
\end{lemma}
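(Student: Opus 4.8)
The plan is to assemble the preceding lemmas rather than to argue from scratch, since the hypotheses stated here are exactly the negations of the exceptional cases catalogued earlier. First I would show that the two boundary curves of $A$ cannot lie on two distinct components of $\partial M(K; C_1, \dots, C_r)$. The standing hypotheses on $K$ (not a torus knot, or a torus knot with $r>1$, or with $C_1$ not the braid axis of $B$) imply those of Lemma~\ref{twocomponent3}, which forbids an essential annulus meeting two different boundary tori. As $A$ is assumed essential, its two boundary components must therefore lie on a single boundary torus $\partial N(X)$, with $X$ equal to $K$ or to one of the $C_i$; this is the first assertion. It then remains to treat the cases $X=C_i$ and $X=K$ separately.

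Suppose first that $\partial A \subset \partial N(C_i)$ for some $i$. Since each $C_i$ is an unknot, its exterior $V=\mathbb{S}^3 - N(C_i)$ is a solid torus, and $A$ lies in $V$ as a properly embedded annulus with $\partial A\subset \partial V$. Here I would invoke the standard fact that a solid torus contains no essential annulus, so that $A$ is automatically inessential in the exterior of $C_i$. If one wants an explicit reason: $\partial A$ is a single slope on $\partial V$; if this slope is the meridian of $V$ then $A$ either compresses or cobounds a product region with an annulus in $\partial V$, and otherwise the classification of incompressible surfaces in a solid torus shows that $A$ is isotopic into $\partial V$. In either case $A$ is compressible or boundary parallel in $V$, i.e.\ inessential in the exterior of $C_i$, which is the desired conclusion.

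Now suppose $\partial A \subset \partial N(K)$. Here I would appeal to Lemma~\ref{annulusinK}, whose conclusion is an equivalence describing exactly when $M(K; C_1, \dots, C_r)$ carries an essential annulus with boundary on $K$ that remains essential in the exterior of $K$: namely when $K$ is a torus knot with $r=1$ and $C_1$ the braid axis, or when $K$ is a cable on a companion $P$ whose companion torus $\partial N(P)$ is disjoint from every $C_i$. The hypotheses of the present statement negate both of these alternatives term by term, so Lemma~\ref{annulusinK} guarantees that no such annulus exists. Consequently $A$, although essential in $M(K; C_1, \dots, C_r)$, must fail to be essential in the exterior of $K$; that is, $A$ is inessential in the exterior of its boundary component, completing this case.

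Because the statement is essentially an assembly of Lemmas~\ref{twocomponent3} and \ref{annulusinK} together with the solid-torus fact, the only real care needed is bookkeeping, and I do not expect a serious obstacle. The main thing I would verify is that the two hypotheses displayed here—on the torus-knot case and on the cable-knot case—match up exactly with the hypothesis of Lemma~\ref{twocomponent3} and with the two exceptional alternatives in the equivalence of Lemma~\ref{annulusinK}, so that those lemmas genuinely apply and their exceptional configurations are truly excluded. The one ingredient that is not pure assembly is the claim that a solid torus has no essential annulus, which I would either cite as standard or justify in the single line indicated above.
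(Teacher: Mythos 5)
Your proposal is correct and follows essentially the same route as the paper's proof: Lemma~\ref{twocomponent3} forces $\partial A$ onto a single boundary torus, the solid-torus (unknot exterior) fact handles the case $\partial A\subset \partial N(C_i)$, and Lemma~\ref{annulusinK} handles the case $\partial A\subset \partial N(K)$. The extra justification you sketch for the solid-torus step is more detail than the paper gives (it simply cites the standard fact), but the argument is the same.
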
 

\begin{proof} 
Suppose that $M(K; C_1, \dots, C_r)$ has an essential annulus $A$.
By Lemma~\ref{twocomponent3}, $\partial A$ are contained in just one of $\partial_{K}M$ or $\partial_{C_i} M$ with $i=1, \dots, r$. Call $B$ this boundary component in which $\partial A$ lie. 
If $B = \partial_{C_i} M$, then $A$ is not essential in the exterior of $B$ because the unknot torus has no essential annuli \cite[page 15]{hatcher2007notes}. If $B$ is equal to $\partial_{K}M$, then $A$ is inessential in the exterior of $B$ due to Lemma~\ref{annulusinK}.
\end{proof}
 
\begin{proposition}\label{proposition3}
Consider that if $K$ is a torus knot, then $r>1$ or $r=1$ and $C_1$ is not the braid axis of $B$, or if $K$ is a cable knot on a knot $P$, then the boundary of the tubular neighbourhood of $P$ intersects at least one $C_i$.
Then, $M(K; C_1, \dots, C_r)$ has no essential annulus $A$ with boundaries in one $\partial_{C_i} M(K; C_1, \dots, C_r)$.
\end{proposition}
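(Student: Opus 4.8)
The plan is to argue by contradiction. Suppose $M(K; C_1, \dots, C_r)$ contains an essential annulus $A$ with $\partial A \subset \partial_{C_i} M(K; C_1, \dots, C_r) = \partial N(C_i)$. Since $C_i$ is unknotted, its exterior $V_i = \mathbb{S}^3 \setminus N(C_i)$ is a solid torus, and by Lemma~\ref{proposition2} the annulus $A$ is inessential in $V_i$. The two boundary curves $\partial_1 A$ and $\partial_2 A$ are disjoint essential simple closed curves on the torus $\partial N(C_i)$, hence parallel there; let $A' \subset \partial N(C_i)$ be a sub-annulus they cobound. I will combine $A$ with $A'$ to produce an essential torus and then contradict the atoroidality already established in Proposition~\ref{atoriodal}.

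First I would form the torus $\hat T = A \cup A'$ and locate it relative to the link. Because $A$ is inessential in the solid torus $V_i$, it is either compressible or boundary-parallel there, and in both cases, using that a solid torus is irreducible, $\hat T$ bounds a solid torus $W \subset V_i$. The key observation is that each remaining component of $L$ is disjoint from $\hat T$: the knot $K$ and every $C_j$ with $j \neq i$ are disjoint from $A$ (as $A \subset M(K; C_1, \dots, C_r)$) and disjoint from $A' \subset \partial N(C_i)$ (since $K \cap N(C_i) = \emptyset$ and the $C_j$ are unlinked from $C_i$). As $K$ and each such $C_j$ are connected and miss $\hat T = \partial W$, each of them lies entirely inside or entirely outside $W$.

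Next I would show that $\hat T$ is essential in $M(K; C_1, \dots, C_r)$. It cannot be boundary-parallel, for a parallelism between $\hat T$ and a boundary component of $M(K; C_1, \dots, C_r)$ would exhibit the sub-annulus $A$ as boundary-parallel, contradicting that $A$ is essential. It is incompressible, because a compressing disc for $\hat T$, together with the irreducibility and boundary-irreducibility from Proposition~\ref{proposition0}, would let me surger $\hat T$ and thereby realize $A$ as compressible or boundary-parallel in $M(K; C_1, \dots, C_r)$, again contradicting essentiality of $A$. The fact that $A$ is essential while inessential in $V_i$ forces at least one component of $L$ to be caught by $W$, so that $\hat T$ genuinely separates $K$ from $C_i$ and is not inessential.

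Finally, an essential torus $\hat T$ in $M(K; C_1, \dots, C_r)$ contradicts Proposition~\ref{atoriodal}: under the standing hypotheses (for a torus knot $K$ one has $r>1$ or $C_1$ not the braid axis; for a cable knot $K$ on $P$ the torus $\partial N(P)$ meets some $C_i$; and for hyperbolic $K$ nothing is needed), the manifold $M(K; C_1, \dots, C_r)$ is atoroidal. This contradiction shows that no such annulus $A$ exists. I expect the main obstacle to be the third step, namely verifying rigorously that $\hat T$ is neither compressible nor boundary-parallel. This requires tracking the two cases in which $A$ is inessential in $V_i$ — the compressible case, where $\partial A$ bounds a meridian disc of $V_i$, a copy of the punctured disc $D_i$ that $K$ meets $r_i$ times, versus the boundary-parallel case, where the product region between $A$ and $\partial N(C_i)$ must contain part of $L$ — and confirming in each case that the surgeries used to simplify $\hat T$ are blocked precisely by the strands of $K$ recorded by the nonzero linking number $lk(C_i, K) = r_i > 1$.
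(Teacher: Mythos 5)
Your overall strategy --- cap the essential annulus $A$ with a sub-annulus $A'\subset \partial N(C_i)$ to form a torus $\hat T=A\cup A'$ and then contradict Proposition~\ref{atoriodal} --- is indeed one of the main tools the paper uses, but your proof has a genuine gap at exactly the step you flag: the claim that $\hat T$ is essential in $M(K; C_1,\dots,C_r)$ is not justified, and it is in fact \emph{false} in some of the configurations that must be ruled out. Your stated reason for non-boundary-parallelism (``a parallelism between $\hat T$ and a boundary component would exhibit $A$ as boundary-parallel'') does not hold: $\hat T$ could be parallel to $\partial N(C_j)$ for some $j\neq i$, or to $\partial N(K)$, and the product region involved is between $\hat T$ and a \emph{different} boundary torus, so it says nothing about $A$ being parallel into $\partial N(C_i)$. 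Concretely, if the solid torus $W$ bounded by $\hat T$ contains a single component $C_j$ ($j\neq i$) as its core, then $\hat T$ is boundary-parallel and your contradiction evaporates; here one must instead argue via linking numbers that the core of $A'$ is the longitude of $C_i$, so $C_j$ would be isotopic to $C_i$ in the exterior of $K$, violating the hypothesis that the circles are pairwise non-isotopic. Likewise, when $W$ contains $K$ and all remaining components and $\partial A$ has slope $(a,\pm 1)$ on $\partial N(C_i)$, the torus $\hat T$ need not be essential; the paper instead exhibits the disc bounded by the longitude of $C_i$ as a boundary-compression disc for $A$, contradicting essentiality of $A$ directly. Your incompressibility argument for $\hat T$ is also only sketched (``surger $\hat T$''), though that part can be repaired using boundary-irreducibility from Proposition~\ref{proposition0} when the compressing disc meets the $A'$ side.

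For comparison, the paper's proof does not try to show $\hat T$ is always essential. It runs the case analysis on \emph{how} $A$ fails to be essential in the solid torus $M_{C_i}$ (boundary-parallel versus compressible), and within the boundary-parallel case it splits according to which components of $L$ lie inside the parallelism solid torus $V$, invoking atoroidality only in the sub-cases where $\partial V$ (or an enlarged torus) genuinely is essential, and falling back on linking-number, non-isotopy, and boundary-compression arguments in the remaining sub-cases. Those remaining sub-cases are precisely what your proposal is missing.
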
 

\begin{proof} 
Suppose $M(K; C_1, \dots, C_r)$ contains an essential annulus $A$ with boundaries in one $\partial_{C_i} M(K; C_1, \dots, C_r)$. We denote by $M_{C_i}$ the exterior of $C_i$ in $S^3$. By Lemma~\ref{proposition2}, $A$ is inessential in $M_{C_i}$.
Hence $A$ is compressible, boundary compressible, or boundary parallel in $M_{C_i}$.

If $A$ is boundary parallel to an annulus $A'$ in $\partial N(C_i)$, then $A\cup A'$ bounds a solid torus $V$ in $M_{C_i}$. Because  $A$ is not boundary parallel in $M(K; C_1, \dots, C_r)$, a different boundary component of $M(K; C_1, \dots, C_r)$ must be inside $V$. Suppose first that $K$ is inside $V$. The wrapping number of $K$ inside $V$ must be different from zero as the linking number between $K$ and $C_i$ is different from zero. Furthermore, $K$ can't be the core of $V$ otherwise $K$ could be embedded in $\partial N(C_i)$ implying that  $M(K; C_1, \dots, C_r)$ has an essential annulus with boundaries in $K$ and $C_i$, which is not possible due to Lemma~\ref{twocomponent3}.

The boundaries of $A$, $\partial_1 A$ and $\partial_2 A$, are isotopic to each other and each forms a $(a, b)$-torus knots in $\partial N(C_i)$. Since the linking number between $K$ and $C_i$ is different from zero, each $\partial_i A$ is the torus knot $T(a, b)$ with $b\neq 0$. This implies that if there is at least one more component $C_j$, with $i\neq j$, outside $V$, then $\partial V$ is not boundary parallel in $M(K; C_1, \dots, C_r)$ and so essential, but this is not possible due to Proposition~\ref{atoriodal}. 

So all components different from $C_i$ are inside $V$. Hence each $\partial_i A$ can't be of the form $T(a, \pm 1)$ otherwise the disc which bounds the longitude of $\partial N(C_i)$ is a boundary compression disk for $A$, which implies that $A$ is not essential in $M(K; C_1, \dots, C_r)$, a contradiction. So  each $\partial_i A$ is of the form $T(a, b)$ with $\vert b \vert >1$. But this implies that $\partial V$ is an essential torus for $M(K; C_1, \dots, C_r)$, a contradiction. We conclude that $K$ can't be inside $V$. 

Suppose now that a component $C_j$ different from $C_i$ must be inside $V$. In this case we expand $\partial N(C_i)$ to include $C_j$ so that we obtain an essential trivial torus $T$ containing $C_i$ and $C_j$ in the same inside and $K$ in the other side, but this is a contradiction as $M(K; C_1, \dots, C_r)$ don't have an essential torus.



Consider now that $A$ is compressible in $M_{C_i}$. Then, there is a compression disk $D$ for $A$ in $M_{C_i}$. The surgery of $A$ along $D$ yields two disks, $D_1$ and $D_2$, such that $\partial A = \partial D_1 \cup \partial D_2$.
If one $\partial D_i$ bounds a disk on $\partial N(C_i)$, then by considering a disk with boundary in $A$ close to $\partial D_i$, we can see that $A$ is also compressible in $M(K; C_1, \dots, C_r)$, a contradiction.
Now suppose that none $\partial D_i$ bounds a disk on $\partial N(C_i)$. Then, both $\partial D_1$ and $\partial D_2$ are either isotopic to the longitude or meridian of $\partial N(C_i)$ implying that $A$ is also boundary parallel in $M_{C_i}$. However, we have already ruled out this possibility.
 
Therefore, since an incompressible surface inside an irreducible 3-manifold,
with boundaries contained in the torus boundary components of the 3-manifold, is essential
unless the surface is a boundary parallel annulus \cite[page 15]{hatcher2007notes},  $A$ is essential in $M_{C_i}$. However, this contradicts Lemma~\ref{proposition2}.
\end{proof}  
 
\begin{proposition}\label{proposition4}
Consider that if $K$ is a torus knot, then $r>1$ or $r=1$ and $C_1$ is not the braid axis of $B$, or if $K$ is a cable knot on a knot $P$, then the boundary of the tubular neighbourhood of $P$ intersects at least one $C_i$.
Then, $M(K; C_1, \dots, C_r)$ has no essential annulus $A$ with boundaries in $\partial_{K} M(K; C_1, \dots, C_r)$.
\end{proposition}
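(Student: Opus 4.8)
The plan is to follow the template of the proof of Proposition~\ref{proposition3}, with the solid torus $M_{C_i}$ replaced by the knot exterior $M_K := S^3 \setminus N(K)$ and with the nontriviality of $K$ doing the work that the meridian of the solid torus did there. Suppose for contradiction that $M := M(K; C_1, \dots, C_r)$ contains an essential annulus $A$ with $\partial A \subset \partial_K M$. By Lemma~\ref{proposition2}, $A$ is inessential in $M_K$, so it is compressible, boundary compressible, or boundary parallel there. The first observation I would record is that, since $K$ is the closure of a positive braid with a full twist, $K$ is nontrivial and hence $\partial N(K)$ is incompressible in $M_K$; in particular no essential curve on $\partial N(K)$ bounds a disk in $M_K$.

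For the compressible case, I would take a compression disk $D \subset M_K$ for $A$ and surger $A$ along it, producing two disks $D_1, D_2 \subset M_K$ with $\partial D_1 \cup \partial D_2 = \partial A \subset \partial N(K)$. Incompressibility of $\partial N(K)$ forces each $\partial D_i$ to bound a disk $E_i$ on $\partial N(K)$. Pushing $E_1$, together with a thin collar of $\partial_1 A$ inside $A$, slightly off $\partial N(K)$ into $M$ then produces an honest compression disk for $A$ in $M$: its boundary is isotopic to the core of $A$ and hence essential, and it lies in a collar of $\partial N(K)$, which I may take to be disjoint from the $C_j$. This contradicts essentiality of $A$ in $M$, and is cleaner than the corresponding step of Proposition~\ref{proposition3} precisely because on a nontrivial knot exterior no essential boundary slope compresses.

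The boundary parallel case is where I expect the real work to lie. Here $A$ cobounds a solid torus $V$ with an annulus $A' \subset \partial N(K)$, and since $A$ is not boundary parallel in $M$ some circle $C_j$ must lie in the interior of $V$ (note $K$ cannot, as $V \subset M_K$). Writing $B := \overline{\partial N(K) \setminus A'}$, I would form the torus $T := A \cup B$, which is parallel in $S^3$ to $\partial N(K)$ and bounds the solid torus $N'(K) := N(K) \cup V$ with core $K$. The delicate point, and the main obstacle, is to check that $T$ is essential in $M$: it is incompressible from the outside because $K$ is nontrivial; incompressible from the inside because any compressing disk there would be a meridian disk of $N'(K)$ and so would meet $K$; and not boundary parallel because the product region realizing its parallelism to $\partial N(K)$ now contains the removed $C_j$, while $T$ is visibly not parallel to any $\partial N(C_i)$. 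Thus $T$ is essential in $M$ yet inessential in $M_K$, contradicting Lemma~\ref{noessentialtorus}.

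Having excluded the compressible and boundary parallel cases, I would finish exactly as in Proposition~\ref{proposition3}: an incompressible surface whose boundary lies on the torus boundary components of an irreducible $3$-manifold is essential unless it is a boundary parallel annulus \cite[page 15]{hatcher2007notes}, so $A$ would be essential in $M_K$, contradicting Lemma~\ref{proposition2}; this also disposes of the remaining boundary compressible case. Throughout, the hypotheses on $K$ enter only through Lemma~\ref{proposition2}, so no further case distinction on the knot type is needed here.
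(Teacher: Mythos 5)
Your proposal is correct, and its overall skeleton (reduce to inessentiality in $M_K$ via Lemma~\ref{proposition2}, then rule out the compressible and boundary-parallel cases, absorbing boundary-compressibility into the standard fact from \cite[page 15]{hatcher2007notes}) matches the paper's; your compressible case is the paper's argument with the two subcases merely reordered, using incompressibility of $\partial N(K)$ up front instead of splitting on whether $\partial D_i$ bounds a disk. Where you genuinely diverge is the boundary-parallel case. The paper works with the parallelism torus $\partial V = A \cup A'$ itself: it shows the enclosed $C_i$ has wrapping number greater than one in $V$, deduces that $\partial V$ is an unknotted torus with meridional $\partial A$, and concludes $\partial V$ is essential, contradicting Proposition~\ref{atoriodal}. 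You instead cap $A$ with the \emph{complementary} annulus $B = \overline{\partial N(K)\setminus A'}$ to get the torus $T = \partial(N(K)\cup V)$, which is manifestly parallel to $\partial N(K)$ in $M_K$ (hence inessential there) but essential in $M$ because the product region is punctured by the enclosed $C_j$; this contradicts Lemma~\ref{noessentialtorus}. Your route buys two things: it avoids the wrapping-number analysis and the discussion of whether $V$ is knotted, and it appeals to the unconditional Lemma~\ref{noessentialtorus} rather than to Proposition~\ref{atoriodal}, whose hypotheses are not literally those of the present proposition (so the hypotheses on $K$ really do enter only through Lemma~\ref{proposition2}, as you observe). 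The paper's route keeps all the torus-essentiality bookkeeping localized to the solid torus $V$ containing $C_i$. Both are sound; just make sure, as you implicitly do, that $T$ is pushed slightly off $\partial N(K)$ and that in the compressible case you take an innermost disk $E_i$ so that its interior misses $\partial A$.
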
 
  
\begin{proof} 
Suppose $M(K; C_1, \dots, C_r)$ contains an essential annulus $A$ with boundaries in $\partial_{K} M(K; C_1, \dots, C_r)$. We denote by $M_{K}$ the exterior of $K$ in $S^3$. By Lemma~\ref{proposition2}, $A$ is inessential in $M_{K}$.
So $A$ is compressible, boundary compressible, or boundary parallel in $M_{K}$.
  
If $A$ is boundary parallel to an annulus $A'$ in $\partial N(K)$, then $A\cup A'$ bounds a solid torus $V$ in $M_{K}$. Since  $A$ is not boundary parallel in $M(K; C_1, \dots, C_r)$,  one component $C_i$ must be inside $V$. If there is a meridian disk $D$ of $V$ which does not intersect $C_i$, then we conclude that the linking number between $C_i$ and $K$ would be zero, a contradiction. Hence, the wrapping number of $C_i$ in $V$ is greater than zero. 
Because the unknot has no essential torus \cite[page 15]{hatcher2007notes}, $\partial V $ is trivial. This implies that the boundaries of $A$ are isotopic to the meridian of $\partial N(K)$ and $A$ is a meridional annulus of $\partial N(K)$. The wrapping number of $C_i$ inside $V$ must be greater than one because the linking number between $K$ and $C_i$ is greater than one. This implies that $\partial V$ is not boundary parallel to $\partial N(C_i)$. Thus, $\partial V$ is essential in $M(K; C_1, \dots, C_r)$. But this contradicts Proposition~\ref{atoriodal}.

Suppose now that $A$ is compressible in $M_{K}$. Then, there is a compression disk $D$ for $A$ in $M_{K}$. The surgery of $A$ along $D$ yields two disks, $D_1$ and $D_2$, such that $\partial A = \partial D_1 \cup \partial D_2$.
If one $\partial D_i$ bounds a disk on $\partial N(K)$, then by considering a disk with boundary in $A$ close to $\partial D_i$, we can see that $A$ is also compressible in $M(K; C_1, \dots, C_r)$, a contradiction.
Then, none $\partial D_i$ bounds a disk on $\partial N(C_i)$. So each $\partial D_i$ is essential in $\partial N(K)$. But this implies that $K$ is a trivial knot, a contradiction.

Therefore, as $A$ is an incompressible and not boundary parallel surface in $M_{K}$, $A$ is essential in $M_{K}$, which is not possible by Lemma~\ref{proposition2}.
\end{proof} 

Therefore, we conclude the following proposition from this section. 
 
\begin{proposition}\label{annular}
Let $B$ be a positive braid with $n$ strands and at least one positive full twist representing a knot $K$. Consider a collection of unlinked circles $C_1, \dots, C_r$ that are not isotopic to each other in the exterior of the braid closure of $B$ with each $C_i$ encircling $r_i$ strands of $B$ with $1<r_i\leq n$ such that after applying a positive full twist along each $C_i$ we obtain a positive braid diagram with at least one full twist from $B$. We denote the link $K \cup C_1 \cup \dots \cup C_r$ by $L$.
\begin{itemize}
\item If $K$ is a hyperbolic knot, then $L$ is always annular.

\item If $K$ is a torus knot, then $L$ is annular if and only if $r>1$ or $r=1$ and $C_1$ is not the braid axis of $B$.

\item If $K$ is a satellite knot, then $L$ is annular if and only if $K$ is not a cable knot or if $K$ is a cable knot on a knot $P$, then the boundary of the tubular neighbourhood of $P$ intersects at least one $C_i$.
\end{itemize} 
\end{proposition}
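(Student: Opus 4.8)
The plan is to read Proposition~\ref{annular} as the assembly of the three ``single location'' results of this section, organized along the trichotomy that the nontrivial knot $K$ (it is a closed positive braid with a full twist, hence nontrivial) is exactly one of torus, hyperbolic, or satellite. The first step is to observe that an essential annulus $A$ in $M(K; C_1, \dots, C_r)$ has both boundary circles on $\partial M$, so up to relabelling there are exactly three possibilities: both of $\partial_1 A, \partial_2 A$ lie on two distinct boundary components; both lie on a single $\partial_{C_i} M$; or both lie on $\partial_K M$. These three cases are exhaustive, so excluding each one excludes all essential annuli, while producing a single essential annulus of any type shows $M$ is not annular.

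For the ``annular'' direction I would verify, case by case on the type of $K$, that the hypotheses of Lemma~\ref{twocomponent3}, Proposition~\ref{proposition3} and Proposition~\ref{proposition4} are all met, and then invoke them to kill the three location types respectively. When $K$ is hyperbolic it is neither a torus nor a cable knot, so the conditional hypotheses of those three results hold vacuously and every essential annulus is excluded, giving the first bullet. When $K$ is a torus knot with $r>1$ or $C_1$ not the braid axis, the torus clause of each hypothesis is satisfied and the cable clause is vacuous, again excluding all three types; this is the ``if'' half of the second bullet. When $K$ is satellite, the same three results apply provided either $K$ is not a cable knot (all conditional hypotheses vacuous) or $K$ is a cable on $P$ with $\partial N(P)$ meeting some $C_i$ (the cable clause holds), which gives the ``if'' half of the third bullet.

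For the converse I would exhibit an explicit essential annulus in each excluded case using the constructions already established. If $K$ is a torus knot with $r=1$ and $C_1$ the braid axis, Lemma~\ref{proposition1} (equivalently the torus case of Lemma~\ref{annulusinK}) produces an essential annulus with boundaries on $\partial_K M$ and $\partial_{C_1} M$, so $M$ is not annular; this completes the second bullet. If $K$ is a cable knot on $P$ with $\partial N(P)$ disjoint from every $C_i$, the cable case of Lemma~\ref{annulusinK} produces an essential annulus with both boundaries on $\partial_K M$, completing the third bullet. Taking contrapositives then yields the stated ``only if'' directions.

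The routine but essential bookkeeping, and the only place where genuine care is needed, is matching the somewhat awkwardly phrased conditional hypotheses of Lemmas~\ref{twocomponent3}, \ref{annulusinK} and Propositions~\ref{proposition3}, \ref{proposition4} to the three mutually exclusive knot types, and confirming that for each type the relevant clause is either satisfied or vacuous. In particular I would emphasize that Proposition~\ref{proposition4} already absorbs the exterior-essential case classified by Lemma~\ref{annulusinK}, so that under the stated hypotheses no essential annulus with boundary on $\partial_K M$ survives, whether or not it is essential in the exterior of $K$; checking this consistency, together with the exhaustiveness of the three location types, is the main (though light) obstacle, after which the proposition follows immediately from the cited results.
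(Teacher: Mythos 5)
Your proposal is correct and matches the paper's intended argument: the paper states Proposition~\ref{annular} as the direct assembly of the section's results (it gives no separate proof beyond ``we conclude the following proposition from this section''), and your case split by boundary location, your use of Lemma~\ref{twocomponent3} and Propositions~\ref{proposition3}--\ref{proposition4} for the annular direction, and your use of Lemmas~\ref{proposition1} and~\ref{annulusinK} to produce the essential annuli in the excluded cases is exactly that assembly. Your remark that the conditional hypotheses must be checked against the torus/hyperbolic/satellite trichotomy, and that Proposition~\ref{proposition4} already subsumes the exterior-essential case via Lemma~\ref{proposition2}, is the right bookkeeping.
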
 

\section{Final classification and corollary}
In this section we obtain Theorem~\ref{maintheorem}. For this, we use the fact that a link complement admits a complete hyperbolic structure if and only if it is irreducible, boundary irreducible, annular, and atoriodal, which is proved by Thurston \cite{Thurston}.

\begin{named}{Theorem~\ref{maintheorem}}
Let $B$ be a positive braid with $n$ strands and at least one positive full twist representing a knot $K$. Consider a collection of unlinked circles $C_1, \dots, C_r$ that are not isotopic to each other in the exterior of the braid closure of $B$ with each $C_i$ encircling $r_i$ strands of $B$ with $1<r_i\leq n$ such that after applying a positive full twist along each $C_i$ we obtain a positive braid diagram with at least one full twist from $B$. We denote the link $K \cup C_1 \cup \dots \cup C_r$ by $L$.
\begin{itemize}
\item If $K$ is a hyperbolic knot, then the link $L$ is always hyperbolic.  

\item If $K$ is a torus knot, then the link $L$ is hyperbolic if and only if $r> 1$ or $C_1$ is not the braid axis of $B$. 

\item If $K$ is a satellite knot, then the link $L$ is hyperbolic if and only if whenever $K$ is a generalized $q$-cabling of the core of a knotted torus $T$, then $T$ intersects at least one $C_i$. For instance, it happens when there is $a_i > q$ which is not a multiple of $q$.
\end{itemize} 
\end{named}

\begin{proof}  
From Proposition~\ref{proposition0}, the link $L$ is always irreducible and  boundary irreducible.

If $K$ is a hyperbolic knot, then the link $L$ is atoriodal by Proposition~\ref{atoriodal} and annular by Proposition~\ref{annular}. Hence $K$ is hyperbolic.

Consider now that $K$ is a torus knot, then the link $L$ is atoriodal by Proposition~\ref{atoriodal}. We conclude from Proposition~\ref{annular} that the link $L$ is hyperbolic if and only if $r\neq 1$ or $C_r$ is not the braid axis of $B$.

Suppose finally that $K$ is a satellite knot. From Proposition~\ref{atoriodal}, the link $L$ is atoroidal if and only if whenever $K$ is a generalized $q$-cabling of the core of a knotted torus $T$, then $T$ intersects at least one $C_i$. From Proposition~\ref{annular}, $L$ is annular if and only if $K$ is not a cable knot or if $K$ is a cable knot on a knot $P$, then the boundary of the tubular neighbourhood of $P$ intersects at least one $C_i$. A cable knot is a generalized cabling but a generalized cabling can not be a cable knot. Therefore, we conclude that the link $L$ is hyperbolic if and only if $T$ intersects at least one $C_i$ whenever $K$ is a generalized $q$-cabling of the core of $T$.
\end{proof} 

\begin{corollary}\label{firstcorollary}
Let $B$ be a positive braid with $n$ strands and at least one positive full twist representing a torus knot $K$. Consider a circle $C_1$ encircling $r_i$ strands of $B$ with $1<r_i < n$ such that after applying a positive full twist along $C_1$ we obtain a positive braid diagram with at least one full twist from $B$. The closure of the braid obtaining from $B$ by adding at least four full twists in either direction along $C_1$ is hyperbolic.  
\end{corollary}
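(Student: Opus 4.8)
The plan is to realise each twisted knot as a Dehn filling of the hyperbolic manifold $M(K;C_1)$ and then to control the filling slope geometrically. First I would verify that $M(K;C_1)=\mathbb{S}^3-N(K\cup C_1)$ is hyperbolic. Here $K$ is a torus knot, $r=1$, and $C_1$ encircles $r_i$ strands with $r_i<n$; since the braid axis of $B$ is the circle encircling all $n$ strands, the hypothesis $r_i<n$ guarantees that $C_1$ is not the braid axis. We are therefore in the torus-knot case of Theorem~\ref{maintheorem} with ``$C_1$ not the braid axis,'' which yields that $K\cup C_1$ is hyperbolic.

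Next I would recognise the twisted knots as Dehn fillings. Because $C_1$ is an unknot encircling the strands to be twisted, the closure $K_m$ of the braid obtained from $B$ by inserting $m$ full twists along $C_1$ is precisely the image of $K$ after $-1/m$ surgery on $C_1$. As $\pm 1/m$ surgery on an unknot returns $\mathbb{S}^3$, the filled manifold is the knot complement $\mathbb{S}^3-N(K_m)$, obtained from $M(K;C_1)$ by filling the cusp of $C_1$ along the slope $m\lambda-\mu$, where $\mu$ and $\lambda$ are the meridian and the $0$-framed longitude of $C_1$. By Thurston's hyperbolic Dehn surgery theorem \cite{Thurston} all but finitely many of these fillings are hyperbolic; the real content of the corollary is the \emph{explicit} threshold of four twists.

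To obtain that threshold I would estimate the length of the slope $m\lambda-\mu$ on the cusp of $C_1$. After flattening the twist region the circle $C_1$ bounds an $r_i$-punctured disc, and the resulting cusp cross-section is, up to a bounded shear, a Euclidean rectangle in which $\mu$ and the twisting curve $\lambda$ are essentially orthogonal, with $\lambda$ of length at least $2$ in a maximal horoball neighbourhood. Hence the slope $m\lambda-\mu$ has length at least $2\lvert m\rvert$, which exceeds $6$ as soon as $\lvert m\rvert\geq 4$. Filling only the cusp of $C_1$ (and leaving the cusp of $K$ unfilled) and invoking the $6$-theorem of Agol and Lackenby then shows that $\mathbb{S}^3-N(K_m)$ is hyperbolic whenever $\lvert m\rvert\geq 4$. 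Equivalently, once hyperbolicity of $K\cup C_1$ is in hand, one may cite Purcell's explicit twisting result \cite[Theorem 3.2]{MR2660459} to conclude directly.

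The main obstacle is this quantitative step: upgrading Thurston's non-effective ``all but finitely many'' to the stated constant four. The crux is the cusp-geometry estimate, namely the lower bound of $2$ on the length of $\lambda$, since it is exactly this length together with the sharp filling constant $6$ that forces $\lvert m\rvert\geq 4$ to suffice; note that the corollary only asserts hyperbolicity for $\lvert m\rvert\geq 4$ as a sufficient condition, so the crude bound $2\lvert m\rvert$ is all that is needed and no claim about $\lvert m\rvert=3$ is required. One must also check that the estimate is uniform across all admissible data---every number $1<r_i<n$ of encircled strands and every positive braid $B$ with a full twist---so that a single constant works in all cases; packaging this uniformity is what makes citing Purcell's theorem the cleanest route.
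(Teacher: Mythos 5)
Your reduction to Dehn filling and your verification that $M(K;C_1)$ is hyperbolic (since $r_i<n$ forces $C_1$ not to be the braid axis, so the torus-knot case of Theorem~\ref{maintheorem} applies) match the paper's first step. But the quantitative heart of your argument has a genuine gap: the claim that on a maximal cusp of $C_1$ the longitude $\lambda$ has length at least $2$ and is essentially orthogonal to $\mu$, so that the slope $m\lambda-\mu$ has length at least $2|m|$. You assert this from the picture of the $r_i$-punctured disk, but nothing in the setup supplies such a bound; the only universal estimate available is that every slope on a maximal cusp has length at least $1$, which is far too weak for the $6$-theorem to say anything at $|m|=4$ or $5$. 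Establishing a longitude-length bound for generalized augmented links is itself a substantial geometric theorem, and even where such bounds exist (this is the content of Purcell's work that you propose to cite) you would need to check that the resulting threshold is actually $\le 4$; you cannot simply assume it. As written, your argument proves only Thurston's qualitative statement that all but finitely many twistings are hyperbolic, not the stated constant.

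The paper obtains the constant $4$ by an entirely different, combinatorial route that exploits the hypothesis that $K$ is a \emph{torus} knot. The meridian filling of the cusp of $C_1$ returns $\mathbb{S}^3-N(K)$, which is annular (Seifert fibered), and the $\pm 1/s$ filling returning the twisted knot $K'$ lies at distance $|s|$ from the meridian. If $K'$ were toroidal, Gordon--Wu's classification of annular-versus-toroidal fillings \cite[Theorem 1.1]{GordonToroidal} would force $|s|\in\{4,5\}$ and $M(K;C_1)$ to be one of three explicit link exteriors, which it is not; if $K'$ were annular, \cite[Theorem 1.1]{GordonAnnular} bounds $|s|$ below $4$; and triviality of $K'$ is excluded by \cite[Theorem 1]{GordonWu}. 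Your $6$-theorem strategy makes no use of the annularity of the meridian filling, which is precisely the leverage that produces the sharp constant; to salvage your approach you would either have to prove the cusp estimate $\ell(\lambda)\ge 2$ for these links or replace the $6$-theorem by the distance bounds on exceptional fillings, as the paper does.
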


\begin{proof}
Denote by $K'$ the closure of the braid obtaining from $B$ by adding $s$ full twists with $\vert s \vert\geq 4$ along $C_1$. 

Consider that the knot $K$ has an essential torus. The closure of $B$ has an essential annulus. The manifold $M(K; C_1)$ is hyperbolic from Theorem~\ref{maintheorem}. Then, by Gordon and Wu \cite[Theorem 1.1]{GordonToroidal}, $\vert s\vert = 4$ or $5$ and $M(K; C_1)$ is homeomorphic to $M_1$, $M_2$, or $M_3$, with these manifolds being the exteriors of the links $L_1, L_2, L_3$ in \cite[Figure 7.1 (a), (b), and (c)]{GordonToroidal}, respectively.
However, $M(K; C_1)$ is not homeomorphic to any $M_1$, $M_2$, or $M_3$, a contradiction.
Consider now that $K$ has an essential annulus. Then, $\vert s\vert < 3$ by Gordon and Wu \cite[Theorem 1.1]{GordonAnnular}, a contradiction. Lastly, $K$ is not the trivial knot (not boundary irreducible) by Gordon and Wu \cite[Theorem 1]{GordonWu}. Therefore, $K'$ is hyperbolic.
\end{proof}


\bibliographystyle{amsplain}  

\bibliography{References}

\end{document}